\topskip \setlength{\parindent}{0pt} \setlength{\parskip}{5pt plus
\newtheorem{theorem}{Theorem}
\newtheorem{proposition}[theorem]{Proposition}
\newtheorem{corollary}[theorem]{Corollary}
\newtheorem{definition}[theorem]{Definition}
\newtheorem{conjecture}[theorem]{Conjecture}
\newcommand{\da}{\mbox{-}}
\newcommand{\we}{\equiv}
\newcommand{\oi}{\sim}
\renewcommand{\S}[1]{\ensuremath{\mathcal{S}_{#1}}}
\newcommand{\Sav}[2]{\ensuremath{\mathcal{S}_{#1}(#2)}}
\newcommand{\Sp}[3]{\ensuremath{\mathcal{S}_{#1}(#2)[#3]}}
\newcommand{\Spt}[4]{\ensuremath{\mathcal{S}_{#1}(#2)[#3;#4]}}
\newcommand{\Pop}{\ensuremath{\mathcal{P}}}
\newcommand{\F}{\ensuremath{\mathcal{F}}}
\newcommand{\red}{\ensuremath{\mathrm{red}}}
\begin{document}
\title{Some Wilf-equivalences for vincular patterns}

%

\author{Andrew M. Baxter}
\address{Department of Mathematics, Pennsylvania State University, State College, PA 16802 USA}
\email{baxter@math.psu.edu}

\author{Mark Shattuck}
\address{Department of Mathematics, University of Tennessee, Knoxville, TN 37996 USA}
\email{shattuck@math.utk.edu}

\keywords{permutations, pattern avoidance, vincular patterns}
\subjclass[2000]{05A15, 05A19}

\maketitle
\begin{abstract}
We prove several Wilf-equivalences for vincular patterns of length $4$, some of which generalize to infinite families of vincular patterns.  We also present functional equations for the generating functions for the number of permutations of length $n$ avoiding $\sigma$ for the patterns 124-3, 134-2, 231-4, 241-3, 132-4, and 142-3.  This nearly completes the Wilf-classification of vincular patterns of length $4$.  As a corollary, these results imply Wilf-equivalences for certain barred patterns of length 5 with a single bar.
\end{abstract}

\section{Introduction}\label{sec:intro}

Vincular patterns were first introduced by Babson and Steingr\'{i}msson in 2000 in \cite{Babson2000}, and the Wilf-classification for length 3 patterns was completed soon thereafter by Claesson in \cite{Claesson2001}.  The classification for length 4 vincular patterns is still incomplete, although progress has been made in \cite{Elizalde2006, Kitaev2005, Baxter2013, Kasraoui2012}.  In this paper, we answer the remaining parts of Conjecture 17 of \cite{Baxter2012}.  This paper, combined with previous literature and computation, suggests that only two Wilf-equivalences remain for vincular patterns of length $4$, listed in the conclusion as Conjecture \ref{conj:LastWilf1}.

Let $[n]=\{1,2,\dotsc,n\}$.  For a word $w \in [n]^k$, we write  $w=w_1 w_2 \dotsm w_k$ and define the \emph{reduction} $\red(w)$ to be the word obtained by replacing the $i$-th smallest letter(s) of $w$ with $i$.  For example $\red(839183)=324132$.  If $\red(u)=\red(w)$, we say that $u$ and $w$ are \emph{order-isomorphic} and write $u\oi w$.

A permutation $\pi\in\S{n}$ \emph{contains} $\sigma\in\S{k}$ \emph{as a classical pattern} if there exists a subsequence $\pi_{i_1}\pi_{i_2}\dotsm\pi_{i_k}$ for $1\leq i_1 < i_2 < \dotsm < i_k\leq n$ such that  $\pi_{i_1}\pi_{i_2}\dotsm\pi_{i_k}\oi\sigma$.  Vincular patterns (also called ``generalized patterns'' or ``dashed patterns'') resemble classical patterns, except that some of the indices $i_j$ must be consecutive.  Formally, we can consider a vincular pattern as a pair $(\sigma, X)$ for a permutation $\sigma\in\S{k}$ and a set of adjacencies $X\subseteq [k-1]$.   The subsequence $\pi_{i_1}\pi_{i_2}\dotsm\pi_{i_k}$ for $1\leq i_1 < i_2 < \dotsm < i_k\leq n$  is a \emph{copy} (or \emph{occurrence}) of $(\sigma, X)$ if $\pi_{i_1}\pi_{i_2}\dotsm\pi_{i_k}\oi\sigma$ \emph{and} $i_{j+1}-i_{j} = 1$ for each $j\in X$.  If a copy of $(\sigma, X)$ appears in $\pi$, we say that $\pi$ \emph{contains} $(\sigma, X)$, and otherwise we say $\pi$ \emph{avoids} $(\sigma, X)$.
In practice, we write $(\sigma, X)$ as the permutation $\sigma$ with a dash between $\sigma_j$ and $\sigma_{j+1}$ if $j\not\in X$ and refer to ``the vincular pattern $\sigma$'' without explicitly writing $X$.  For example, $(1243, \{3\})$ is written $1\da2\da43$.  The permutation $162534$ has a copy of $1\da2\da43$ as witnessed by the subsequence $1253$, but the subsequence $1254$ is not a copy of $1\da2\da43$ since the 5 and 4 are not adjacent.  \emph{Classical} patterns are those of the form $(\sigma, \emptyset)$ where no adjacencies are required, while \emph{consecutive} patterns are those of the form $(\sigma, [k-1])$ where the copies of $\sigma$ must appear as subfactors $\pi_i \pi_{i+1} \cdots \pi_{i+k-1} \oi \sigma$.

Classical patterns exhibit several trivial symmetries which extend to vincular patterns as well.  The \emph{reverse} of a permutation $\pi=\pi_1\pi_2 \cdots \pi_n$ is given by $\pi^r=\pi_n\pi_{n-1}\cdots\pi_1$ and the \emph{complement} by $\pi^c=(n+1-\pi_1)(n+1-\pi_2)\cdots(n+1-\pi_n)$.  For vincular pattern $(\sigma, X)$ of length $k$, the reverse of $(\sigma, X)$ is the pattern $(\sigma,X)^{r} = (\sigma^r,  \{k-x : x \in X\})$.  Thus the reverse of $13\da2\da4$ is $4\da2\da31$.  Similarly, the complement of $(\sigma, X)$ is the pattern $(\sigma,X)^{c} = (\sigma^c, X)$.  Thus we see $(13\da2\da4)^c = 42\da3\da1$.  Observe that $\pi$ contains $(\sigma, X)$ if and only if $\pi^r$ contains $(\sigma, X)^r$ and likewise for the complement.

If a permutation $\pi$ does not contain a copy of a vincular pattern $\sigma$, we say that $\pi$ \emph{avoids} $\sigma$.  The subset of $\S{n}$ of permutations avoiding $\sigma$ is denoted $\Sav{n}{\sigma}$.   Two patterns $\sigma$ and $\tau$ are \emph{Wilf-equivalent} if $\bigl|\Sav{n}{\sigma}\bigr| = \bigl|\Sav{n}{\tau}\bigr|$ for all $n\geq 0$, and we denote this by $\sigma \we \tau$.  From the preceding remarks on symmetry it is clear that $\sigma \we \sigma^{r} \we \sigma^{c} \we \sigma^{rc}$.

The results in this paper translate to Wilf-equivalences for barred patterns.   A \emph{barred pattern} is a permutation $\sigma$ where some subset of the letters have bars over them.  Let $\sigma'$ be the classical pattern formed by the unbarred letters of $\sigma$ and let $\sigma''$ be the underlying permutation.  A permutation $\pi$ \emph{avoids} barred pattern $\sigma$ if every copy of $\sigma'$ extends to a copy of $\sigma''$ (including the vacuous case where $\pi$ avoids $\sigma'$ entirely).  For example, consider the barred pattern $\sigma=23\bar{1}4$, so then $\sigma'=123$ and $\sigma''=2314$.  Then we see that $\pi=34152$ avoids $\sigma$, since $\pi_1 \pi_2 \pi_4=345$ is the only copy of $\sigma'$ and it extends to $\pi_1 \pi_2 \pi_3 \pi_4 = 3415 \oi \sigma''$.  On the other hand, $\pi=324651$ does not avoid $\sigma$ since $345 \oi \sigma'$ cannot extend to create a $2314$.  We use the analogous notation $\Sav{n}{\sigma}$ for the permutations avoiding a barred pattern $\sigma$, and define \emph{Wilf-equivalence} for barred patterns in the obvious way.

It is known that there are [sets of] vincular patterns $B$ such that the sets $\Sav{n}{B}$ can also be
characterized by avoiding [sets of] barred patterns.  For example, $\Sav{n}{41\bar{3}52} = \Sav{n}{3\da14\da2}$.  The question of classifying such patterns was raised by Steingr\'{i}msson in \cite{Steingrimsson2010Survey} and
answered by Tenner in \cite{Tenner2013} for avoidance sets classified by avoiding a single vincular pattern.
The vincular patterns in Section \ref{sec:other} each happen to correspond to avoiding a barred pattern, and we state the barred patterns when appropriate.

The paper is divided as follows.  Section \ref{sec:prelims} presents some preliminary work regarding partitioning the set $\Sav{n}{\sigma}$ according to prefix patterns.  These results are used repeatedly in sections \ref{sec:twofamilies} and \ref{sec:umbral}.  Section \ref{sec:twofamilies} proves two infinite families of Wilf-equivalences.  Section \ref{sec:umbral} proves special cases of the families from Section \ref{sec:twofamilies} by a different method which also produces functional equations for the appropriate generating functions.  Section \ref{sec:other} proves several sporadic Wilf-equivalences through a variety of combinatorial methods.  The remaining equivalences to complete the Wilf-classification of vincular patterns of length $4$ are given in the final section.

\section{Partitioning by prefixes}\label{sec:prelims}

Let us make some preliminary observations concerning the partitioning of avoidance classes according to prefixes, which we will make use of in the two subsequent sections.  Given a permutation $p\in \S{k}$, let $\Sp{n}{B}{p}$ be the set of permutations $\pi\in \Sav{n}{B}$ such that $\pi_1 \pi_2 \dotsm \pi_k \oi p$.  For further refinement, let $w\in \{1,2,\dotsc,n\}^k$ and define the set
\begin{equation*}
 \Spt{n}{B}{p}{w} = \{\pi\in \Sp{n}{B}{p}  \colon\, \pi_i = w_i \mathrm{\ for\ } 1\leq i\leq k\}.
\end{equation*}
Obviously $\Spt{n}{B}{p}{w}$ is empty unless $w \oi p$.   For an example, the set of $1\da2\da3$-avoiders of length $5$ starting with ``53'' is:
\begin{equation*}
 \Spt{5}{1\da2\da3}{21}{53} = \{ 53142, 53214, 53241, 53412, 53421 \}.
\end{equation*}
The redundancy of including $p$ in the $\Spt{n}{B}{p}{w}$ notation is maintained to emphasize the subset relation.  In what follows, we will often use $a$, $b$, and $c$ to denote the first, second, and third letters, respectively, of a permutation or prefix.

By looking at the prefix of a permutation, one can identify likely ``trouble spots'' where forbidden patterns may appear.  For example, suppose we wish to avoid the pattern $23\da1$.  Then the presence of the pattern $12$ in the prefix indicates the potential for the entire permutation to contain a $23\da1$ pattern.  In fact, one can quickly determine that $\Spt{n}{23\da1}{12}{ab} = \emptyset$ if $a>1$, since in that case $1$ appears to the right of the prefix and so $ab1$ provides a copy of $23\da1$.  This argument generalizes in the obvious way to the following proposition, which is adapted from Lemma 11 of \cite{Baxter2013}.

\begin{proposition}\label{prop:tailgaps}
 Let $\sigma = \sigma_1 \sigma_2 \dotsm \sigma_k \da \sigma_{k+1}$ be a vincular pattern and let $p\in\S{k}$ such that $p\oi \sigma_1 \sigma_2 \dotsm \sigma_k$.  Then the following conditions each imply that $\Spt{n}{\sigma}{p}{w}= \emptyset$:
\begin{enumerate}
 \item If $\sigma_{k+1}=1$ and $m$ is the index so that $\sigma_{m}= 2$, then $\Spt{n}{\sigma}{p}{w} = \emptyset$ if $w_{m} > 1$.
 \item If $\sigma_{k+1}=k+1$ and $\ell$ is the index so that $\sigma_{\ell} = k$, then $\Spt{n}{\sigma}{p}{w} = \emptyset$ if $w_{\ell} < n$.
 \item If $1 < \sigma_{k+1} < k+1$, $\ell$ is the index so that $\sigma_{\ell} = \sigma_{k+1} -1$, and $m$ is the index so that $\sigma_{m} = \sigma_{k+1} + 1$, then $\Spt{n}{\sigma}{p}{w}=\emptyset$ if $w_{m} - w_{\ell} > 1$.
\end{enumerate}
\end{proposition}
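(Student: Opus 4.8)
The plan is to show that in each of the three cases, every permutation $\pi$ whose prefix equals $w$ (that is, $\pi_1 \cdots \pi_k = w$) is forced to \emph{contain} a copy of $\sigma$, so that $\Spt{n}{\sigma}{p}{w}$ can have no elements. The crucial observation is that, since $\pi$ occupies its first $k$ positions with the values $w_1, \dots, w_k$ and $w \oi p \oi \sigma_1 \cdots \sigma_k$, the subsequence $\pi_1 \pi_2 \cdots \pi_k$ is already order-isomorphic to the initial block $\sigma_1 \cdots \sigma_k$ and sits in the \emph{consecutive} positions $1, 2, \dots, k$. Hence every adjacency required by $\sigma$ among its first $k$ letters is automatically satisfied, regardless of the rest of $X$; and because the pattern is written $\sigma_1 \cdots \sigma_k \da \sigma_{k+1}$ with no adjacency between $\sigma_k$ and $\sigma_{k+1}$, producing a full copy of $\sigma$ reduces to exhibiting a \emph{single} index $j > k$ with $\pi_1 \cdots \pi_k \pi_j \oi \sigma$. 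Thus in each case I reduce the claim to locating one appropriately valued entry lying strictly to the right of the prefix.

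The bookkeeping I would set up first is that the first $k$ letters of $\sigma$ use exactly the values $\{1, \dots, k+1\} \setminus \{\sigma_{k+1}\}$, and order-isomorphism transfers the relevant extremal and consecutive-value relations from $\sigma_1 \cdots \sigma_k$ to $w$. In case (1), $\sigma_{k+1}=1$ is the global minimum of $\sigma$, so $\sigma_m = 2$ is the smallest of $\sigma_1 \cdots \sigma_k$ and therefore $w_m = \min\{w_1, \dots, w_k\}$; if $w_m > 1$ then the value $1$ is absent from the prefix, so it appears as $\pi_j$ for some $j > k$, and since $\pi_j = 1 < w_m$, appending it to the prefix yields a copy of $\sigma$. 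Case (2) is the mirror image under complementation: $\sigma_\ell = k$ is the maximum of the prefix, so $w_\ell = \max\{w_1, \dots, w_k\}$, and if $w_\ell < n$ the value $n$ occurs at some $j > k$ with $\pi_j = n > w_\ell$, again completing a copy.

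For case (3), the values $\sigma_{k+1}-1$ and $\sigma_{k+1}+1$ are consecutive within the prefix value set $\{1, \dots, k+1\} \setminus \{\sigma_{k+1}\}$, since the only integer between them is the omitted value $\sigma_{k+1}$ itself. Order-isomorphism then makes $w_\ell$ and $w_m$ consecutive among the values of $w$, so no $w_i$ satisfies $w_\ell < w_i < w_m$. Consequently, if $w_m - w_\ell > 1$, then some integer $v$ with $w_\ell < v < w_m$ is missing from the prefix and hence occurs at a position $j > k$; placing this $\pi_j = v$ after the prefix gives $\pi_1 \cdots \pi_k \pi_j \oi \sigma$. Since the value of $\sigma_{k+1}$ must be $1$, $k+1$, or strictly between, these three cases are exhaustive.

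The one point that genuinely needs care — and the only real obstacle — is justifying that the offending entry lies strictly to the right of the prefix rather than inside it. This is precisely what the value-set bookkeeping above delivers: in each case the required value is shown to be \emph{absent} from $\{w_1, \dots, w_k\}$, so it must occupy some position $j > k$, and the dash before $\sigma_{k+1}$ is exactly what permits that position to be arbitrary. Once this is established, the order-isomorphism $\pi_1 \cdots \pi_k \pi_j \oi \sigma$ is immediate from the construction, completing the argument in all three cases.
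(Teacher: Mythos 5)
Your proof is correct and follows essentially the same approach as the paper, which only sketches the argument via the $23\da1$ example (the missing value must appear to the right of the prefix, and since the dash before $\sigma_{k+1}$ permits an arbitrary gap, appending that value to the prefix produces a copy of $\sigma$) and states that it ``generalizes in the obvious way.'' Your write-up is the careful version of that generalization, including the value-set bookkeeping in case (3) that the paper leaves implicit.
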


For readers familiar with enumeration schemes, we note that Proposition \ref{prop:tailgaps} is translated from the gap vector criteria for avoiding $\sigma$ of the given form.

We will make use of bijections $\Spt{n}{\sigma}{p}{w} \to \Spt{n'}{\sigma}{p'}{w'}$ which are each done by deleting the first few letters from a permutation.  Specifically, we define the following deletion map:
\begin{definition}
 Let $R\subseteq \{1,2,\dotsc, k\}$ and let $\pi\in\S{n}$ for $n\geq k$.  Define the deletion map $d_R: \S{n} \to \S{n-|R|}$ to act by deleting $\pi_r$ for each $r\in R$, and then reducing the resulting word.
\end{definition}
For example, $d_{\{1,3\}}(3154276)$ deletes the first and third letters, 3 and 5, to form the word $14276$, which is then reduced to the permutation $13254$.  Thus $d_{\{1,3\}}(3154276)=13254$.

It is clear that $d_R : \S{n} \to \S{n-|R|}$ is a surjection such that each image $\pi\in\S{n-|R|}$ has $|R|! \cdot \binom{n}{|R|}$ pre-images.  If one restricts to specific prefixes, however, we can get a bijection.  For a word $w$ with no repeated letters, define $d_R(w)$ to be the word obtained by deleting $w_r$ for each $r\in R$ and subtracting $\bigl|\{r\in R: w_r < w_i\}\bigr|$ from each remaining $w_i$.  For example $d_{\{1,3\}}(63485) = 364$.  Note that this definition is equivalent to the one given above when $w\in\S{n}$.  We now see that restricting the domain of $d_R$ yields a bijection $d_R: \S{n}[p;w] \to \S{n-|R|}[d_R(p);d_R(w)]$.

For certain choices of $p$, $R$, and $\sigma$, applying $d_R$ to a permutation in $\Spt{n}{\sigma}{p}{w}$ results in another $\sigma$-avoiding permutation.  Such an event motivates the following definition:

\begin{definition}
The set of indices $R$ is \emph{reversibly deletable for $p$ with respect to $B$} if the map
\begin{equation}
d_R: \Spt{n}{B}{p}{w} \to \Spt{n-|R|}{B}{d_R(p)}{d_R(w)}
\end{equation}
is a bijection for all $w$ such that $\Spt{n}{B}{p}{w}\neq \emptyset$.
\end{definition}

In the case of patterns of the form $\sigma_1 \sigma_2 \dotsm \sigma_k \da \sigma_{k+1}$, reversibly deletable sets are easy to find.  The following is a strengthening of Lemma 10 in \cite{Baxter2013}.

\begin{proposition}\label{prop:tailrd1}
Let $\sigma = \sigma_1 \sigma_2 \dotsm \sigma_k \da \sigma_{k+1}$ be a vincular pattern and let $p\in \S{t}$ be a prefix pattern for $t\leq k$ such that $\sigma_1 \sigma_2 \dotsm \sigma_t \not\oi p$.  Let $r$ be the smallest index such that $p_{r+1}p_{r+2}\dotsm p_{t} \oi \sigma_1 \sigma_2 \dotsm \sigma_{t-r}$.  Then $R=\{1,2,\dotsc, r\}$ is reversibly deletable for $p$ with respect to $\sigma$.
\end{proposition}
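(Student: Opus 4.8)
The plan is to leverage the ambient bijection established just before the statement, namely $d_R\colon \S{n}[p;w] \to \S{n-|R|}[d_R(p);d_R(w)]$, and show that for $R=\{1,2,\dotsc,r\}$ it restricts to a bijection between the corresponding $\sigma$-avoidance classes. Since injectivity is inherited from the ambient bijection and the ambient map is onto $\S{n-r}[d_R(p);d_R(w)]$, it suffices to prove that for every $\pi\in \S{n}[p;w]$ one has $\pi$ avoids $\sigma$ if and only if $d_R(\pi)$ avoids $\sigma$; equivalently, $\pi$ contains $\sigma$ if and only if $d_R(\pi)$ contains $\sigma$. Granting this, $d_R$ carries $\sigma$-avoiders to $\sigma$-avoiders and non-avoiders to non-avoiders, so it restricts to the desired bijection $\Spt{n}{\sigma}{p}{w}\to\Spt{n-r}{\sigma}{d_R(p)}{d_R(w)}$ for each admissible $w$, which is exactly the assertion of reversible deletability.

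One implication is immediate and holds for arbitrary $R$: because $d_R$ only deletes letters from the front and then reduces, any occurrence of $\sigma$ in $d_R(\pi)$ lifts to an occurrence in $\pi$. Indeed, the surviving positions retain their relative order under reduction, and deleting an initial segment preserves adjacency among the remaining positions, so the vincular constraint on $\sigma_1\sigma_2\dotsm\sigma_k$ is respected. Hence $d_R(\pi)$ containing $\sigma$ forces $\pi$ to contain $\sigma$, i.e. deletion preserves avoidance.

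The crux is the converse, for which I would show that \emph{every} occurrence of $\sigma$ in $\pi$ avoids the positions $1,2,\dotsc,r$ entirely, and therefore survives the deletion. An occurrence consists of a consecutive block at positions $i_1,i_1+1,\dotsc,i_1+k-1$ realizing $\sigma_1\sigma_2\dotsm\sigma_k$ together with a later position realizing $\sigma_{k+1}$, so it meets $R$ precisely when $i_1\leq r$. Suppose $i_1\leq r$. Since $t\leq k$, the block extends at least to position $t$, so positions $i_1,\dotsc,t$ lie in both the block and the prefix. Restricting the block pattern yields $\pi_{i_1}\dotsm\pi_t \oi \sigma_1\dotsm\sigma_{t-i_1+1}$, while restricting $\pi_1\dotsm\pi_t\oi p$ yields $\pi_{i_1}\dotsm\pi_t \oi p_{i_1}\dotsm p_t$. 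Combining these gives $p_{i_1}\dotsm p_t \oi \sigma_1\dotsm\sigma_{t-i_1+1}$, which says that the index $r'=i_1-1$ satisfies the defining relation $p_{r'+1}\dotsm p_t \oi \sigma_1\dotsm\sigma_{t-r'}$. But $i_1\leq r$ forces $r'<r$, contradicting the minimality of $r$. (The hypothesis $\sigma_1\dotsm\sigma_t\not\oi p$ is exactly the failure of the case $r'=0$, which guarantees $r\geq 1$ so that $R$ is nonempty and the argument is not vacuous.) Therefore $i_1>r$ for every occurrence, and so $\pi$ containing $\sigma$ forces $d_R(\pi)$ to contain $\sigma$ as well.

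Putting the two implications together with injectivity and surjectivity of the ambient map completes the proof. I expect the main obstacle to be the index bookkeeping in the crux step: one must justify carefully that when $i_1\leq r$ the consecutive block necessarily reaches position $t$ (so that the comparison with the prefix $p$ is legitimate and $\sigma_1\dotsm\sigma_{t-i_1+1}$ is a genuine sub-prefix of $\sigma_1\dotsm\sigma_k$, which uses $t\leq k$), and then translate the resulting overlap cleanly into the minimality condition defining $r$.
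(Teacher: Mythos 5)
Your proposal is correct and takes essentially the same approach as the paper: the paper's (much terser) argument is exactly that a copy of $\sigma$ cannot involve any of the first $r$ letters, because a block starting at position $i_1\leq r$ would give $p_{i_1}\dotsm p_t \oi \sigma_1\dotsm\sigma_{t-i_1+1}$, contradicting the minimality of $r$ (with the case $i_1=1$ ruled out by the hypothesis $\sigma_1\dotsm\sigma_t\not\oi p$). Your writeup simply makes explicit the index bookkeeping and the observation that deleting an initial segment preserves the adjacency needed to lift occurrences back and forth under $d_R$.
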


Note that the $r$ in Proposition \ref{prop:tailrd1} is guaranteed to exist since $p_{t} \oi \sigma_1$.  The proof of this proposition follows from the fact that for $\pi\in\Spt{n}{\sigma}{p}{w}$, the letter $\pi_i$ could not be part of a copy of $\sigma$ unless $\pi_i \pi_{i+1} \dotsm \pi_{t} \oi p_i p_{i+1} \dotsm p_{t} \oi \sigma_1 \sigma_2 \dotsm \sigma_{t-i+1}$.  In other words, the constructed deletion map $d_{\{1,2,\dotsc,r\}}$ cuts the permutation just prior to the first letter that a copy of $\sigma$ could possibly use based on the limited information provided by the prefix pattern $p$.

In the case that the entire prefix has length greater than or equal the initial section of $\sigma$, we need the following.

\begin{proposition}\label{prop:tailrd2}
Let $\sigma = \sigma_1 \sigma_2 \dotsm \sigma_k \da \sigma_{k+1}$ be a vincular pattern and let $p=p_1p_2\cdots p_t\in\S{t}$, where $t \geq k$, such that no consecutive $k$ letters of $p$ are isomorphic to $\sigma_1\sigma_2\cdots\sigma_k$ except for possibly the final $k$ letters of $p$.   Let $r\geq t-k+1$ be the smallest index such that $p_{r+1}p_{r+2}\dotsm p_{t} \oi \sigma_1 \sigma_2 \dotsm \sigma_{t-r}$.  Then $R=\{1,2,\dotsc, r\}$ is reversibly deletable for $p$ with respect to $\sigma$.
\end{proposition}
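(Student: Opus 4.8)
The plan is to reduce the statement to a purely avoidance-theoretic claim and then to confine every occurrence of $\sigma$ to the positions that survive the deletion. Recall from the discussion preceding the definition of reversible deletability that $d_R$ already gives a bijection $\S{n}[p;w]\to\S{n-r}[d_R(p);d_R(w)]$ of the full prefix classes, where $r=|R|$; the only thing left to check is that this bijection carries $\sigma$-avoiders to $\sigma$-avoiders in both directions. Since $d_R$ relabels the positions $>r$ in an order-preserving way, any occurrence of $\sigma$ in $d_R(\pi)$ lifts verbatim to one in $\pi$ (shift every index by $r$; the initial run stays consecutive and the order-isomorphism is unchanged). Thus $\pi\in\Sav{n}{\sigma}$ forces $d_R(\pi)\in\Sav{n-r}{\sigma}$, so $d_R$ restricts to a well-defined injection on the avoidance classes, and the whole proposition reduces to the converse: if $d_R(\pi)$ avoids $\sigma$ then so does $\pi$. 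Equivalently, I must show that every copy of $\sigma$ in $\pi$ uses none of the positions $1,2,\dotsc,r$.

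So I would fix $w$ with $\Spt{n}{\sigma}{p}{w}\neq\emptyset$, take any $\pi\in\S{n}[p;w]$ containing a copy of $\sigma$, and let its initial consecutive run occupy positions $a,a+1,\dotsc,a+k-1$, with the final letter at some position $b>a+k-1$; since $a$ is the smallest index used, it suffices to prove $a>r$ (note $r\le t$, as the defining condition of $r$ holds vacuously at $r=t$). If $a\le t-k$ the run lies entirely in the prefix and is a window of $k$ consecutive letters of $w\oi p$ strictly left of the final window, which by hypothesis cannot be order-isomorphic to $\sigma_1\dotsm\sigma_k$ --- a contradiction, so $a\ge t-k+1$. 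If $a>t$ we are done at once. If $t-k+2\le a\le t$, the part of the run lying in the prefix gives $p_a\dotsm p_t\oi\sigma_1\dotsm\sigma_{t-a+1}$, so that $r':=a-1\ge t-k+1$ satisfies the defining condition of $r$; minimality then yields $r\le a-1$, i.e.\ $a>r$.

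The one remaining and genuinely delicate case is $a=t-k+1$, where the run coincides with the final window of the prefix and $a\le r$ a priori, so the local information in $p$ no longer forces $a>r$. Here I would use nonemptiness. The final letter sits at a position $b>t$, so its value $v$ is not among the fixed prefix values $w_1,\dotsc,w_t$, and $w_{t-k+1}\dotsm w_t\,v\oi\sigma$ fixes the rank of $v$ relative to the run from $\sigma$ alone. Now choose any avoider $\tau\in\Spt{n}{\sigma}{p}{w}$. Because $\tau$ has the same prefix $w$, it has the same final-window run, and the value $v$, being absent from the prefix, must occur in $\tau$ at some position $b^{*}>t$ as well; hence $\tau_{t-k+1}\dotsm\tau_t\,\tau_{b^{*}}\oi\sigma$ is a copy of $\sigma$ in $\tau$, contradicting that $\tau$ avoids $\sigma$. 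Therefore no copy can start at $a=t-k+1$, and combined with the previous paragraph every copy of $\sigma$ in $\pi$ satisfies $a>r$.

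Putting the two directions together, $d_R$ maps $\Spt{n}{\sigma}{p}{w}$ bijectively onto $\Spt{n-r}{\sigma}{d_R(p)}{d_R(w)}$ for every admissible $w$, which is exactly the assertion that $R=\{1,2,\dotsc,r\}$ is reversibly deletable for $p$ with respect to $\sigma$. I expect the main obstacle to be precisely the boundary case $a=t-k+1$: runs beginning strictly after or strictly before the final window are dispatched cleanly by the minimality of $r$ and the window hypothesis, but the final window is exactly the one the hypothesis is allowed to leave isomorphic to $\sigma_1\dotsm\sigma_k$, and excluding a completed occurrence there appears to need the existence of a reference avoider sharing the prefix rather than any feature internal to $p$.
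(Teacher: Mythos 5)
Your proof is correct and follows essentially the same route as the paper's: restrict attention (as the definition of reversible deletability permits) to prefixes $w$ with $\Spt{n}{\sigma}{p}{w}\neq\emptyset$, show that no copy of $\sigma$ can use any of the positions $1,\dotsc,r$, and then restrict the prefix-class bijection $d_R$ as in Proposition~\ref{prop:tailrd1}. Your ``reference avoider'' argument for the boundary case $a=t-k+1$ is exactly the content of the paper's opening observation that $\Spt{n}{\sigma}{p}{w}$ is empty whenever the final $k$ letters of $w$ are isomorphic to $\sigma_1\cdots\sigma_k$ and a completing letter is forced to occur beyond the prefix (the extension of Proposition~\ref{prop:tailgaps}), just phrased without citing that proposition's case-by-case gap conditions.
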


Observe that $\Spt{n}{\sigma}{p}{w}$ is empty in the case that the final $k$ letters of $w$ are isomorphic to $\sigma_1\sigma_2\cdots\sigma_k$ and satisfy the conditions given in Proposition \ref{prop:tailgaps}.  By the definition of reversible deletability, we may focus on the action of $d_R$ for prefixes $w$ which make $\Spt{n}{\sigma}{p}{w}$ non-empty.  We know that if $\pi\in\Spt{n}{\sigma}{p}{w}$, then $\pi_1\pi_2\dotsm \pi_r$ is not part of a copy of $\sigma$, and so we may proceed as in the proof of Proposition \ref{prop:tailrd1}.

\section{Two families of equivalences}\label{sec:twofamilies}

We now consider equivalences involving some general classes of vincular patterns containing a single dash.  Our first result concerns a certain class of vincular patterns having exactly one peak in the initial section.

\begin{theorem}\label{gen2}
Let $k \geq 3$ and $\sigma=\sigma_1\sigma_2\cdots \sigma_k\da\sigma_{k+1}$ denote the vincular pattern of length $k+1$ such that $\sigma_1<\sigma_2<\cdots<\sigma_{i-1}<\sigma_i>\sigma_{i+1}>\cdots>\sigma_k$ and $\sigma_{k+1}=k+1$, where $2 \leq i \leq k-1$.  Let $\tau=\tau_1\tau_2\cdots\tau_k\da\tau_{k+1}$ denote the pattern obtained from $\sigma$ by interchanging the letters $k$ and $k+1$.  Then $\sigma \we \tau$ and this equivalence respects the first letter statistic.
\end{theorem}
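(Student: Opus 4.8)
The plan is to prove the stronger, refined statement that $|\Sp{n}{\sigma}{p}| = |\Sp{n}{\tau}{p}|$ for every prefix pattern $p$ and every $n$, with the value of the first letter recorded, and to obtain this by induction on $n$ using the prefix machinery of Section~\ref{sec:prelims}. The key structural observation is that $\sigma$ and $\tau$ share the same initial section up to order-isomorphism: the initial run of $\sigma$ is unimodal with peak value $\sigma_i = k$ (the largest of the first $k$ letters), and since $\tau$ is obtained by interchanging $k$ and $k+1$, its peak becomes $k+1$ while the first $k$ letters of $\tau$ still reduce to the same unimodal pattern $\rho := \red(\sigma_1\cdots\sigma_k) = \red(\tau_1\cdots\tau_k)$. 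Consequently the reversibly deletable sets supplied by Propositions~\ref{prop:tailrd1} and~\ref{prop:tailrd2}, which depend only on the initial section, are identical for $\sigma$ and for $\tau$.

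First I would dispatch every prefix class whose prefix $p$ is not an occurrence of the full initial section $\rho$. For such $p$, Proposition~\ref{prop:tailrd1} (when $|p|\le k$) or Proposition~\ref{prop:tailrd2} (when $|p|\ge k$) produces a common index set $R=\{1,\dots,r\}$ that is reversibly deletable for $p$ with respect to both patterns. The bijection $d_R\colon \Spt{n}{\sigma}{p}{w} \to \Spt{n-r}{\sigma}{d_R(p)}{d_R(w)}$ and its $\tau$-analogue reduce these classes to strictly smaller $n$ with the \emph{same} target prefix $d_R(p)$, so by the induction hypothesis the $\sigma$- and $\tau$-counts agree. Because the two deletion maps coincide as functions on permutations (each depends only on $R$, hence only on $\rho$), the correspondence is compatible with the first-letter refinement carried along in the induction.

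The crux is the single remaining family of prefix classes, those with $p \oi \rho$ of length exactly $k$, where deletion cannot separate the first $k$ letters from a potential copy of the pattern. Here I would refine by the actual prefix values $w$ and invoke Proposition~\ref{prop:tailgaps}: for $\sigma$ we are in case~(2) with $\sigma_{k+1}=k+1$, so $\Spt{n}{\sigma}{\rho}{w}$ is nonempty only when the prefix peak equals the global maximum, $w_i=n$; for $\tau$ we are in case~(3) with $1<\tau_{k+1}=k<k+1$, so $\Spt{n}{\tau}{\rho}{w}$ is nonempty only when the two largest entries of the window are consecutive integers, i.e.\ the peak exceeds the runner-up by exactly $1$. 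The task is then to exhibit a bijection between the surviving $\sigma$-classes and $\tau$-classes that preserves $\pi_1$. The natural candidate cyclically relabels the top block of values so as to swap the global-maximum role (played by the tail letter in $\sigma$) with the peak role (played by the peak letter in $\tau$), exactly mirroring the interchange of $k$ and $k+1$ that defines $\tau$; arranging the relabel to act only on values lying above $\pi_1$ keeps the first letter fixed.

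I expect the main obstacle to be verifying that this boundary relabeling sends $\sigma$-avoiders to $\tau$-avoiders \emph{globally}. The tailgap conditions only encode the constraint coming from the copy anchored at the very start, whereas avoidance must hold at every window $\pi_j\cdots\pi_{j+k-1}\oi\rho$ simultaneously: for $\sigma$ no later letter may exceed such a window's maximum, while for $\tau$ no later letter may fall strictly between the window's two largest entries. Showing that the relabeling converts each forbidden $\sigma$-configuration into a forbidden $\tau$-configuration and conversely, and that it interacts correctly with the reversible deletions applied to the suffix in the inductive step, is the technical heart of the argument; this is where the hypothesis $2\le i\le k-1$ (a single interior peak, so the runner-up value sits at an existing neighbor $i-1$ or $i+1$) is used to control the relative positions of the two largest window values.
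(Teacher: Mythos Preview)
Your framework is sound and mirrors the paper's: induction on $n$, using that $\sigma$ and $\tau$ share the initial section $\rho$, so Propositions~\ref{prop:tailrd1}--\ref{prop:tailrd2} supply identical reversibly deletable sets for every prefix not realizing $\rho$, and these cases reduce cleanly. You also correctly isolate the one hard case $p\oi\rho$ and correctly read off the two tailgap constraints ($w_i=n$ for $\sigma$; $w_i=w_\ell+1$ for $\tau$).

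The gap is exactly where you flag it. The ``cyclic relabeling of the top block of values'' is never actually defined, and you do not verify that it carries $\sigma$-avoiders to $\tau$-avoiders. As you note, avoidance must hold at \emph{every} $\rho$-window simultaneously, not just the one anchored at the prefix; a value-relabel that fixes the first window can easily create or destroy copies at later windows, and nothing in your sketch controls this. Since this is the entire content of the critical case, the proposal as written does not constitute a proof.

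The paper's resolution is different and sidesteps the need for a global relabeling. Instead of stopping at the length-$k$ prefix, it partitions by the first two maximal monotone \emph{blocks} $B_1,B_2$ (so the prefix extends past the $\rho$-window to the next direction change). In the dangerous case $|B_1|\ge i$ increasing, $|B_2|\ge k-i$ decreasing, the $\sigma$-constraint forces the peak of $B_1$ to be $n$; the paper matches $(B_1,B_2)$ with $(\widetilde{B}_1,B_2)$ where $\widetilde{B}_1$ replaces $n$ by $\max\{x,y\}+1$ (here $x$ is the penultimate letter of $B_1$ and $y$ the first of $B_2$), which is precisely the $\tau$-constraint. One then deletes all of $B_1B_2$ except the last letter. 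Because the deleted stretch contains the only place a copy of either pattern can be anchored in that prefix, both sides reduce to permutations of the same shorter length with the \emph{same} first letter, beginning with an ascent; a short subtraction argument (total minus those beginning with a descent, each handled by the first-letter induction hypothesis) finishes. No relabeling of suffix values is needed because the deletion severs the prefix from the rest. If you want to repair your argument, replace the vague relabel by this two-block refinement and the delete-then-subtract step.
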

\begin{proof}
We will prove $\sigma\equiv\tau$ and that this equivalence respects the first letter statistic together by induction.

We start with the following definition.  By a \emph{block} $B_i$ of a permutation $\pi=\pi_1\pi_2\cdots\pi_n$, we will mean a certain maximal monotonic string of letters defined recursively as follows.  Let $B_1$ be the maximal monotonic initial subfactor of $\pi$.  If $j \geq 2$, then let $B_j$ be the maximal monotonic initial subfactor of the permutation (provided it is non-empty) obtained by removing blocks $B_1, B_2,\ldots, B_{j-1}$ from $\pi$.  For example, if $\pi=148769523 \in \mathcal{S}_9$, then there are four blocks $B_1=148$, $B_2=76$, $B_3=952$ and $B_4=3$.

Let us describe a block as \emph{increasing} or \emph{decreasing} depending on whether or not its letters are in ascending or descending order.  We will often refer to $B_1$ as the initial block.  Given pairwise disjoint subsets $B_1,B_2,\ldots,B_r$ of $[n]$, where the elements of a set may occur in either increasing or decreasing order, and a pattern $\rho$, let $\mathcal{S}_n(\rho)(B_1,B_2,\ldots,B_r)$ denote the subset of $\mathcal{S}_n(\rho)$ whose members contain at least $r$ blocks where the $j$-th block is $B_j$ for $1 \leq j \leq r$.  Given a permutation $\pi=\pi_1\pi_2\cdots\pi_{n}$ and $1 \leq i \leq n-1$, we will say that $\pi$ has a \emph{descent} (\emph{ascent}) at index $i$ if $\pi_i>\pi_{i+1}$ ($\pi_i<\pi_{i+1}$).

Let us assume in the following arguments that an initial block starts with a fixed letter $a$.

Note first that $\mathcal{S}_n(\sigma)(B)$ and $\mathcal{S}_n(\tau)(B)$ are both singletons in the two cases where $B=[n]$ (assuming here either $a=1$ or $a=n$).  Furthermore, since only the final block of a permutation can be a singleton, it follows that $\mathcal{S}_n(\sigma)(B_1,B_2)$ and $\mathcal{S}_n(\tau)(B_1,B_2)$ are the same set when $|B_2|=1$.  So let us assume that permutations contain at least two blocks, with the second block of size at least two.  Clearly, we may also assume $n \geq k+1$ in what follows.

We will now show that $|\mathcal{S}_n(\sigma)(B)|=|\mathcal{S}_n(\tau)(B)|$ for a given decreasing block $B$. Let $b$ denote the last letter of $B$ and $m=n-|B|+1$.  By Proposition \ref{prop:tailrd1} or \ref{prop:tailrd2}, deleting all the letters of $B$ except $b$ results in permutations belonging to $S_m(\sigma)$ and $S_m(\tau)$, which we leave in unreduced form for convenience.
By induction, we know that the total number of members of $S_m(\sigma)[1;b]$  is the same as the total number of members of $S_m(\tau)[1;b]$.  We also have that the permutations in either of these classes which start with a descent are equinumerous, by induction, upon deleting the letter $b$ and considering the resulting permutations of length $m-1$.  By subtraction, the number of members of $S_m(\sigma)[1;b]$ starting with an ascent is the same as the number of members of $S_m(\tau)[1;b]$ starting with an ascent, which implies $|\mathcal{S}_n(\sigma)(B)|=|\mathcal{S}_n(\tau)(B)|$.

If $B$ is increasing with $|B|<i$, then deleting all of the letters of $B$ and considering the first letter of the resulting permutations shows that $|\mathcal{S}_n(\sigma)(B)|=|\mathcal{S}_n(\tau)(B)|$ in this case, by Proposition \ref{prop:tailrd1}.

Now suppose that the initial block $B$ is increasing with $|B|\geq i$ and that the second block is also increasing, where we assume $2 \leq i \leq k-2$ in this case.  By Proposition \ref{prop:tailrd1} or \ref{prop:tailrd2}, we may delete all of the letters of $B$ from the members of both $\mathcal{S}_n(\sigma)(B)$ and $\mathcal{S}_n(\tau)(B)$ to obtain permutations of length $m=n-|B|$, which we leave unreduced.  Note that $m>1$ and suppose that these shorter permutations start with the letter $c$, where $c<\max\{B\}$.  By a subtraction argument similar to the one used above in the case when $B$ was decreasing, the number of members of $S_m(\sigma)[1;c]$ starting with an ascent is the same as the number of members of $S_m(\tau)[1;c]$ starting with an ascent.  Since $c$ is arbitrary, it follows that $|S_n(\sigma)(B)|=|S_n(\tau)(B)|$ in this case.

Next suppose that the initial block $B_1$ is increasing with $|B_1|\geq i$, while the second block $B_2$ is decreasing with $|B_2|< k-i$.  We then delete all the letters of $B_1$ and all but the last letter of $B_2$.  Reasoning as in the case when the initial block $B$ was decreasing shows that $|\mathcal{S}_n(\sigma)(B_1,B_2)|=|\mathcal{S}_n(\tau)(B_1,B_2)|$.

Finally, assume that $B_1$ is increasing with $|B_1|\geq i$ and that $B_2$ is decreasing with $|B_2|\geq k-i$ if $2 \leq i \leq k-2$ (where the same assumptions apply for $B_1$ and $B_2$ if $i=k-1$ except that $B_2$ is allowed to increase as well).  Note that the last letter of $B_1$ in $\pi \in \mathcal{S}_n(\sigma)(B_1,B_2)$ must be $n$ in order to avoid an occurrence of $\sigma$, by (a slight extension of) Proposition \ref{prop:tailgaps}.  Given $B_1$ and $B_2$, let $\widetilde{B}_1$ be obtained by replacing the $n$ at the end of $B_1$ with $\max\{x,y\}+1$, where $x$ is the penultimate letter of $B_1$ and $y$ is the first letter of $B_2$, and let $\widetilde{B}_2=B_2$.  Assume now that $B_2$ is decreasing.  Deleting all letters belonging to the blocks $B_1$ and $B_2$ except for the last letter of $B_2$ from each $\pi \in \mathcal{S}_n(\sigma)(B_1,B_2)$, and doing likewise with $\widetilde{B}_1$ and $\widetilde{B}_2$  for each $\lambda \in \mathcal{S}_n(\tau)(\widetilde{B}_1,\widetilde{B}_2)$, results in members of $\mathcal{S}_m(\sigma)$ and $\mathcal{S}_m(\tau)$ all having the same first letter and starting with an ascent if $m>1$, where $m=n-|B_1|-|B_2|+1$.  Previous arguments now show that the cardinalities of these shorter classes of permutations are the same, which implies $|\mathcal{S}_n(\sigma)(B_1,B_2)|=|\mathcal{S}_n(\tau)(\widetilde{B}_1,\widetilde{B}_2)|$.  If $i=k-1$ and $B_2$ is increasing, then a similar argument applies and yields the same conclusion, this time upon deleting all letters in $B_1$ and $B_2$.  Note further that $\widetilde{B}_1$ and $\widetilde{B}_2$ range over all possibilities for the first two blocks of $\lambda \in \mathcal{S}_n(\tau)(S,T)$, where $|S| \geq i$ and $|T|\geq k-i$ with $S$ increasing and $T$ decreasing if $2 \leq i \leq k-2$ (with $T$ allowed also to increase if $i=k-1$), as $B_1$ and $B_2$ range over all possibilities for the first two blocks of $\pi \in \mathcal{S}_n(\sigma)$ satisfying the same requirements.

Collecting all of the cases above in which a permutation starts with the given letter $a$ completes the induction and the proof.
\end{proof}

\emph{Remark:}  The preceding proof shows further that the equivalence respects descent sets (and hence the number of descents).

As special cases of Theorem \ref{gen2}, we get equivalences such as the following:
\begin{align*}
&\bullet1243\da5\we1253\da4\\
&\bullet1432\da5\we1532\da4\\
&\bullet2431\da5\we2531\da4.
\end{align*}

We recall the following general result.

\begin{theorem}\label{KE}
(Kitaev \cite{Kitaev2005}, Elizalde \cite{Elizalde2006}.) If $\alpha$ and $\beta$ are consecutive patterns of length $k$ and $\alpha \we \beta$, then $\alpha\da(k+1) \we \beta\da(k+1)$.
\end{theorem}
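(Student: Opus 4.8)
The plan is to prove the statement by a recursive decomposition of $\alpha\da(k+1)$-avoiding permutations according to the position of their largest entry. This decomposition will express the avoidance count purely in terms of the consecutive-$\alpha$-avoidance numbers, so that equal consecutive counts for $\alpha$ and $\beta$ force equal vincular counts, and the result follows by induction on the length $n$.

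First I would fix notation: let $a_n = \bigl|\Sav{n}{\alpha\da(k+1)}\bigr|$, and let $c_m$ denote the number of permutations in $\S{m}$ avoiding $\alpha$ as a \emph{consecutive} pattern, so that the hypothesis $\alpha\we\beta$ says precisely that $c_m$ agrees for $\alpha$ and $\beta$ for every $m$. Given $\pi\in\S{n}$ with $n\geq 1$, write $\pi = \omega\, n\, \upsilon$, where $n$ is the maximum entry and $\omega$, $\upsilon$ are the (possibly empty) factors appearing before and after it. The heart of the argument is the claim that $\pi$ avoids $\alpha\da(k+1)$ if and only if (i) $\omega$ avoids $\alpha$ as a consecutive pattern and (ii) $\upsilon$ avoids $\alpha\da(k+1)$; both conditions depend only on the order-isomorphism types of $\omega$ and of $\upsilon$.

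To establish this claim I would classify each window of $k$ consecutive positions of $\pi$ into one of three mutually exclusive and exhaustive types: lying entirely inside $\omega$, containing the position of $n$, or lying entirely inside $\upsilon$. A window inside $\omega$ that is order-isomorphic to $\alpha$ forms, together with the later entry $n$ (which exceeds everything in $\omega$ and may be separated by the dash), a copy of $\alpha\da(k+1)$; conversely such a copy forces a consecutive $\alpha$ in $\omega$. A window containing the position of $n$ has maximum $n$, so no larger entry can complete it, and it is always harmless. A window inside $\upsilon$ can only be completed using an entry strictly to its right, which necessarily lies in $\upsilon$; hence such completions correspond exactly to copies of $\alpha\da(k+1)$ internal to $\upsilon$. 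Checking both directions against these three cases yields the equivalence. From it the recursion is immediate: choosing the position of $n$, the set of entries assigned to $\omega$, the consecutive-$\alpha$-avoider $\omega$, and the $\alpha\da(k+1)$-avoider $\upsilon$ independently gives
\[
a_n = \sum_{j=0}^{n-1}\binom{n-1}{j}\, c_j\, a_{n-1-j}.
\]

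Since the right-hand side involves only the numbers $c_j$ and the values $a_{n-1-j}$ of strictly smaller index, and the $c_j$ coincide for $\alpha$ and $\beta$ by hypothesis, a straightforward strong induction on $n$ (with base case $a_0 = 1$) shows $a_n$ is the same for $\alpha\da(k+1)$ and $\beta\da(k+1)$, which is the asserted Wilf-equivalence. I expect the only delicate point to be the window analysis underlying the equivalence claim: one must verify that the three position cases are genuinely exhaustive and, in particular, that every window straddling the position of $n$ is harmless because its maximum equals $n$. Once that case check is secured, the product count and the induction are routine.
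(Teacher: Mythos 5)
Your proof is correct and complete. One point of comparison worth noting: the paper itself contains no proof of this statement --- it is quoted as a known result with citations to Kitaev and Elizalde --- so the only internal content to compare against is the citation, and your argument is essentially the standard one from those references. The heart of it, as you identify, is the decomposition $\pi=\omega\,n\,\upsilon$ at the maximum entry together with the equivalence that $\pi$ avoids $\alpha\da(k+1)$ exactly when $\omega$ avoids $\alpha$ consecutively and $\upsilon$ avoids $\alpha\da(k+1)$; your three-case window analysis is the right verification, and the cases are genuinely exhaustive since a window of $k$ consecutive positions must lie entirely left of the position of $n$, contain it, or lie entirely right of it. This yields the binomial convolution $a_n=\sum_{j=0}^{n-1}\binom{n-1}{j}c_j\,a_{n-1-j}$, and your strong induction on $n$ finishes the proof. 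In exponential generating function terms your recursion reads $A'(x)=C(x)A(x)$, hence $A(x)=\exp\bigl(\int_0^x C(t)\,dt\bigr)$, which is the form in which Kitaev states the result; that formulation makes explicit the stronger fact your argument actually proves, namely that the counting sequence for $\alpha\da(k+1)$-avoiders is completely determined by the consecutive-$\alpha$-avoidance numbers, of which the asserted Wilf-equivalence is an immediate corollary.
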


As a corollary of Theorems \ref{gen2} and \ref{KE}, we obtain the following result.

\begin{corollary}\label{cor1}
If $\tau$ has the form described in Theorem \ref{gen2}, then $\tau_1\tau_2\cdots\tau_k\da k \we \tau_k\cdots \tau_2\tau_1\da k$.
\end{corollary}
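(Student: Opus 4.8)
The plan is to chain together three Wilf-equivalences, invoking Theorem~\ref{gen2} twice and Theorem~\ref{KE} once. First I would reconstruct $\sigma$ from $\tau$: since $\tau$ arises from an up-down pattern $\sigma=\sigma_1\sigma_2\cdots\sigma_k\da(k+1)$ by interchanging the letters $k$ and $k+1$, and since the peak $\sigma_i=k$ is the largest of the first $k$ letters, we have $\tau_i=k+1$ and $\tau_{k+1}=k$, with $\tau_j=\sigma_j$ otherwise. In particular $\tau_{k+1}=k$, so the pattern $\tau_1\tau_2\cdots\tau_k\da k$ on the left-hand side is literally $\tau$ itself. Thus it suffices to prove $\tau\we\tau_k\cdots\tau_2\tau_1\da k$, and Theorem~\ref{gen2} already gives $\tau\we\sigma$, so I may work with $\sigma$ in place of $\tau$.

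For the middle link I would write $\sigma=\alpha\da(k+1)$, where $\alpha=\sigma_1\sigma_2\cdots\sigma_k$ is a consecutive pattern of length $k$. Reversal is a trivial symmetry, so $\alpha\we\alpha^r$ with $\alpha^r=\sigma_k\cdots\sigma_2\sigma_1$. Theorem~\ref{KE} then upgrades this to $\alpha\da(k+1)\we\alpha^r\da(k+1)$, that is, $\sigma\we\sigma_k\cdots\sigma_2\sigma_1\da(k+1)$; denote this last pattern by $\sigma^\star$.

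Finally I would apply Theorem~\ref{gen2} a second time, now to $\sigma^\star$. The key check is that $\sigma^\star$ again has the required form: reversing a strictly increasing-then-decreasing word keeps it increasing-then-decreasing, moving the peak $\sigma_i=k$ from position $i$ to position $i'=k-i+1$, and since $2\leq i\leq k-1$ we also have $2\leq i'\leq k-1$, while the final letter is still $k+1$. Hence Theorem~\ref{gen2} gives $\sigma^\star\we\tau^\star$, where $\tau^\star$ is obtained from $\sigma^\star$ by swapping $k$ and $k+1$. The one thing that needs care — and I expect it to be the only real obstacle — is to verify by direct bookkeeping that $\tau^\star$ is exactly $\tau_k\cdots\tau_2\tau_1\da k$: swapping $k\leftrightarrow k+1$ in $\sigma^\star$ places $k+1$ at position $i'$ and $k$ at the end, while reversing the first $k$ letters of $\tau$ sends the $k+1$ at position $i$ to position $i'$ and leaves $\tau_{k+1}=k$, so the two patterns coincide. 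Stringing the links together yields
\begin{equation*}
\tau_1\tau_2\cdots\tau_k\da k=\tau\we\sigma\we\sigma^\star\we\tau^\star=\tau_k\cdots\tau_2\tau_1\da k,
\end{equation*}
as desired. There is no analytic difficulty here: the entire content is packaged into the two cited theorems and the reversal symmetry, so the remaining work is purely the identification of the relevant patterns.
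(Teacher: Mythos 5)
Your proof is correct and is essentially the paper's own argument: the paper gives no explicit proof, asserting only that the corollary follows from Theorems \ref{gen2} and \ref{KE}, and your chain $\tau_1\cdots\tau_k\da k=\tau\we\sigma\we\sigma^\star\we\tau^\star=\tau_k\cdots\tau_1\da k$ (Theorem \ref{gen2}, then reversal symmetry upgraded by Theorem \ref{KE}, then Theorem \ref{gen2} again) is precisely that intended derivation. The bookkeeping you supply --- noting $\tau_{k+1}=k$ so the left-hand pattern is $\tau$ itself, checking the reversed initial section has its peak at position $k-i+1\in[2,k-1]$ so Theorem \ref{gen2} reapplies, and matching $\tau^\star$ with $\tau_k\cdots\tau_2\tau_1\da k$ --- is exactly the content the paper leaves to the reader.
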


Our next result concerns a class of patterns whose section to the left of the dash is increasing and contains all but one letter.

\begin{theorem}\label{gen1}
If $k \geq 3$ and $1 \leq i \leq k-1$, then let $\sigma_i$ denote the vincular pattern of length $k+1$ given by $\sigma_i=12\cdots i(i+2)\cdots(k+1)\da(i+1)$.  Then $\sigma_i\equiv\sigma_j$ for all $1 \leq i,j \leq k-1$.  Furthermore, these equivalences respect the first letter statistic.
\end{theorem}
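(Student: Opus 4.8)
The plan is to reduce to adjacent indices and then mimic the block induction used for Theorem \ref{gen2}. Since first-letter-preserving bijections compose, it suffices to produce, for each $1 \le i \le k-2$, a first-letter-preserving bijection witnessing $\sigma_i \equiv \sigma_{i+1}$; the full statement then follows by transitivity. I would argue by induction on $n$, fixing the first letter $a$ and partitioning $\mathcal{S}_n(\sigma_i)$ according to its initial blocks $B_1, B_2, \ldots$ (maximal monotone runs), exactly as in the proof of Theorem \ref{gen2}. Because the section of $\sigma_i$ to the left of the dash reduces to the increasing pattern $12\cdots k$, the only place an occurrence can begin is inside a maximal increasing run of length at least $k$; accordingly the relevant threshold for the initial block is $|B_1| \ge k$, rather than the peak-index threshold appearing in Theorem \ref{gen2}.

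The easy cases are those in which $B_1$ cannot host a length-$k$ increasing factor. If $\pi$ consists of a single (necessarily increasing) block, or the initial increasing block $B_1$ has $|B_1| < k$, then no letter of $B_1$ can participate in a copy of $\sigma_i$ or of $\sigma_{i+1}$: it is too short to supply the increasing factor, and being leftmost it cannot serve as the displaced letter $\sigma_{k+1}$, which always follows its factor. I would therefore reversibly delete $B_1$ by Proposition \ref{prop:tailrd1}, apply the inductive bijection to the resulting shorter tail (which preserves that tail's first letter), and then reattach the unchanged $B_1$. Maximality of $B_1$ forces a descent at the $B_1$/tail boundary, so reattachment creates no new increasing factor, and the first letter $a=b_1$ is preserved throughout.

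The crux is the case $|B_1| = m \ge k$, where $\sigma_i$ and $\sigma_{i+1}$ genuinely differ. Writing the run values as $b_1 < b_2 < \cdots < b_m$, a short analysis (the gap-vector content of Proposition \ref{prop:tailgaps}, together with its extension to longer runs used in Theorem \ref{gen2}) shows that if $\pi$ avoids $\sigma_i$ then no later letter of $\pi$ may take a value in the band $(b_i, b_{m-k+i+1})$; since every value of $[n]$ occurs in $\pi$, this forces $b_i, b_{i+1}, \ldots, b_{m-k+i+1}$ to be $m-k+2$ consecutive integers, and conversely this is the only constraint coupling $B_1$ to the tail. For $\sigma_{i+1}$ the identical reasoning forces the shifted window $b_{i+1}, \ldots, b_{m-k+i+2}$ to be consecutive instead. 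This reduces the problem to a purely local modification of the initial run: I would delete all of $B_1$, apply induction to the tail, and reattach a modified block $\widetilde{B}_1$ in which the forced consecutive window has been slid up by one position while $a$ is kept fixed, relabelling the tail values accordingly. One then checks that $\widetilde{B}_1$ sweeps out exactly the admissible initial runs for $\sigma_{i+1}$ as $B_1$ sweeps the admissible runs for $\sigma_i$, yielding $|\mathcal{S}_n(\sigma_i)(B_1)| = |\mathcal{S}_n(\sigma_{i+1})(\widetilde{B}_1)|$ with first letter respected; if the monotonicity of $B_2$ produces sub-cases, they are handled parallel to the corresponding sub-cases in Theorem \ref{gen2}.

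I expect the relabelling in this last case to be the main obstacle. Unlike the short-block case, sliding the consecutive window changes which integers the run occupies, so the value sets of run and tail are coupled and the tail must be globally renumbered. The work is to define this renumbering so that it restricts to an order-isomorphism on the tail — thereby preserving $\sigma_{i+1}$-avoidance via the inductive hypothesis — while still maintaining the descent at the block boundary and introducing no new occurrence of $\sigma_{i+1}$. I would verify these three points by the same gap-vector bookkeeping (Propositions \ref{prop:tailgaps} and \ref{prop:tailrd2}) that drives the final case of Theorem \ref{gen2}. Summing the resulting equalities over all admissible initial block structures with first letter $a$ then completes the induction and the proof.
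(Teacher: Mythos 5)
Your skeleton is essentially the paper's: induction on $n$ respecting the first letter, with permutations split according to whether the initial increasing run can host the length-$k$ increasing factor (run length $<k$ versus $\geq k$), the short case handled by reversible deletion and the long case by showing that the admissible initial runs are equinumerous for the two patterns while the tails are matched by induction. Your structural claim in the long case is correct, and in fact sharper than conditions (i)--(iii) stated in the paper's proof: avoiding $\sigma_i$ with initial increasing run $b_1<\cdots<b_m$, $m\geq k$, forces the $m-k+2$ values $b_i,\ldots,b_{m-k+i+1}$ to be consecutive (the paper's condition (ii) stops one position short; its final count remains $i$-independent after the correction, so the theorem is unaffected). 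The genuine divergence is the finishing move: the paper never constructs your sliding bijection. Instead it fixes the first letter $a$, the first-descent position $\ell$, and the run's top value $b$, counts the admissible runs in closed form as a Vandermonde convolution
\begin{equation*}
\sum_{t}\binom{t-a-1}{i-2}\binom{b+k-\ell-t-1}{k-i-1}=\binom{b+k-a-\ell-1}{k-2},
\end{equation*}
observes that this is independent of $i$ (so all the patterns $\sigma_{i_1},\sigma_{i_2}$ are compared at once, with no reduction to adjacent indices), and matches the tails by induction. This sidesteps entirely the relabelling problem you correctly identify as your main obstacle.

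Two concrete gaps remain in your plan. First, your case analysis misses permutations whose initial block is decreasing (equivalently, those starting with a descent): your easy-case justification (``no letter of $B_1$ can participate in a copy'') fails there, since the last letter of a maximal decreasing initial block can begin an increasing window reaching into the tail --- for example, with $\sigma_1=134\da2$ the permutation $21453$ contains the pattern through the letters $1,4,5,3$, so deleting all of $B_1=21$ is not reversible. The fix is cheap (delete only the first letter, via Proposition \ref{prop:tailrd1} with $p=21$), but as written the induction does not cover all of $\mathcal{S}_n(\sigma_i)$. Second, in the crux case the sliding map is underdetermined: for reattachment to work it must preserve not only the minimum $a$ and the size of $B_1$ but also its \emph{maximum}, since the maximum controls the descent at the block/tail boundary and the rank of the tail's first letter inside the complement --- exactly what your renumbering must respect. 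This invariant is achievable (contract the forced window to a single value, mark the $(i+1)$-st smallest element of the contracted set instead of the $i$-th, and re-expand), after which the unique order isomorphism between complements does the rest; alternatively you can drop the bijection altogether and just count admissible runs with fixed minimum, maximum, and size, which is again an $i$-independent Vandermonde sum --- precisely the paper's device.
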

\begin{proof}

Given $k \geq 3$ and $1 \leq i \leq k-1$, let $\sigma_i$ denote the vincular pattern $12\cdots i(i+2)(i+3)\cdots (k+1)$-$(i+1)$.  We will show by induction that the number of permutations of length $n$ which avoid $\sigma_i$ and start with a given letter is the same for all $i$.  If $n<k+1$, then the result is clear, so let us assume $n\geq k+1$.

First note that the set of permutations of length $n$ avoiding $\sigma_i$ and starting $x_1<x_2<\cdots<x_j>x_{j+1}$, where $x_1,x_2,\ldots,x_{j+1}$ are given and $1 \leq j <k$, have the same cardinality for all $i$ by induction.  This can be seen upon deleting the first $j$ letters, which by Proposition \ref{prop:tailrd1} are extraneous concerning the avoidance or containment of any of the $\sigma_i$, and considering the permutation of $[n-j]$ that results whose first element is fixed (i.e., the element corresponding to  $x_{j+1}$ in the reduction of the set $[n]-\{x_1,x_2,\ldots,x_j\}$).

So assume that the permutations $\pi=\pi_1\pi_2\cdots\pi_n$ under consideration which avoid $\sigma_i$ contain at least one descent and have their first descent at index $\ell$, where $k \leq \ell \leq n-1$ is given.  Let us refine this set further by assuming $\pi_1=a$ and $\pi_{\ell}=b$.
Note that in order for $\pi=\pi_1\pi_2\cdots\pi_n$ to avoid $\sigma_i$, have its first descent at index $\ell$, and satisfy $\pi_1=a$ and $\pi_\ell=b$, the following conditions must hold concerning the first $\ell$ letters: (i) $a=\pi_1<\pi_2<\cdots<\pi_i$, (ii) $\pi_{i+r}=\pi_i+r$ for $1 \leq r \leq \ell-k$, and (iii) $\pi_{\ell-(k-i-1)}<\pi_{\ell-(k-i-2)}<\cdots<\pi_\ell=b$.

We will show that the number of permutations $\pi=\pi_1\pi_2\cdots\pi_n$, which avoid $\sigma_i$ and have their first descent at index $\ell$ with $\pi_1=a$ and $\pi_\ell=b$, does not depend on $i$ and is hence the same for all $i$.  To do so, it suffices to show (which is done in the next-to-last paragraph) that the number of possible subsets $S$ comprising $\pi_1\pi_2\cdots\pi_{\ell}$ and satisfying conditions (1), (2), and (3) does not depend on $i$.

To see why, note first that if the initial $\ell$ letters of a permutation $\pi$ satisfy the three conditions above, with $\pi_{\ell+1}<\pi_\ell$, then each of the first $\ell$ letters are seen to be extraneous concerning the avoidance of $\sigma_i$ and thus may be deleted.  Regardless of $i$ and the choice of letters for $S$, there are $\pi_\ell-\ell=b-\ell$ choices for $\pi_{\ell+1}$ in the permutation $\pi_{\ell+1}\pi_{\ell+2}\cdots\pi_n$.  Note further that if $\pi_{\ell+1}\pi_{\ell+2}\cdots\pi_n$ is reduced to a member of $\mathcal{S}_{n-\ell}$, then $\pi_{\ell+1}$ would correspond to a member of $[b-\ell]$.  Suppose now $\pi=\pi_1\pi_2\cdots\pi_n$ and $\rho=\rho_1\rho_2\cdots\rho_n$ are permutations avoiding $\sigma_{i_1}$ and $\sigma_{i_2}$, where $1\leq i_1 <i_2\leq k-1$, with $S$ and $T$ the subsets of $[n]$ corresponding to the first $\ell$ letters and satisfying conditions (1)-(3) above but with $i=i_1$ and $i=i_2$, respectively.  Then there are $b-\ell$ choices for the letter $\pi_{\ell+1}$ in $\pi$ and the same number for $\rho_{\ell+1}$ in $\rho$.  Furthermore, if $\pi_{\ell+1}$ corresponds to the same letter in $\text{red}(\pi_{\ell+1}\pi_{\ell+2}\cdots\pi_n)$ as does $\rho_{\ell+1}$ in
$\text{red}(\rho_{\ell+1}\rho_{\ell+2}\cdots\rho_n)$, we see that the number of possibilities concerning the final $n-\ell$ letters of $\pi$ or $\rho$ is the same in this case, by induction. Since $\pi_{\ell+1}$ and $\rho_{\ell+1}$ always correspond to a member of $[b-\ell]\subseteq[n-\ell]$, it follows that the number of possibilities for $\pi_{\ell+1}\pi_{\ell+2}\cdots\pi_n$, given $S$, is the same as the number of possibilities for $\rho_{\ell+1}\rho_{\ell+2}\cdots\rho_n$, given $T$ (in fact, this number is the same for all $S$ and $T$).  Since the number of possible $S$ and $T$ is always the same for $i_1$ and $i_2$ as the parameters $b$ and $\ell$ vary, it follows that number of permutations of length $n$ starting with $a$ and having an initial increasing subsequence of length at least $k$ and avoiding either $\sigma_{i_1}$ or $\sigma_{i_2}$ is the same.

Let us now count the number of subsets $S$ satisfying conditions (1)-(3) above, where $1 \leq i \leq k-1$.  If $i=1$, then $\pi_1=a$ implies $\pi_{r+1}=a+r$, $1 \leq r \leq \ell-k$, so that $\pi_\ell=b$ implies there are $b-1-(a+\ell-k)$ letters to choose from for the $k-2$ letters directly preceding the last.  Thus, there are $\binom{b+k-a-\ell-1}{k-2}$ possible $S$ when $i=1$.  Now assume $2 \leq i \leq k-1$.  Let $\pi_i=t$ in $\pi_1\pi_2\cdots\pi_\ell$.  Then there are $\binom{t-a-1}{i-2}$ choices for the section $\pi_2\pi_3\cdots \pi_{i-1}$ since $\pi_1=a$ and $\pi_i=t$ and $\binom{b-(\ell-k)-t-1}{k-i-1}$ choices for the section $\pi_{\ell-(k-i-1)}\pi_{\ell-(k-i-2)}\cdots\pi_{\ell-1}$ since all members of the set $\{t+1,t+2,\ldots,t+\ell-k\}$ must be selected.  Note that $t$ can range from $i+a-1$ to $i+b-\ell$. Thus, there are
$$\sum_{t=i+a-1}^{i+b-\ell}\binom{t-a-1}{i-2}\binom{b+k-\ell-t-1}{k-i-1}=\binom{b+k-a-\ell-1}{k-2}$$
possibilities for $S$ in all, where we have used identity (5.26) in \cite[p.169]{Graham1994}.  Therefore, the number of subsets satisfying conditions (1)-(3) does not depend on $i$, as desired.

Collecting the cases above in which a permutation starts with a given letter completes the induction and the proof.
\end{proof}

\emph{Remark:}  Upon fixing $\ell$ and allowing $a$ and $b$ to vary in the preceding proof, one can show by induction that the equivalence respects descents sets.

As special cases of Theorem \ref{gen1}, we get
\begin{align*}
&\bullet 1235\da4 \we 1245\da3 \we 1345\da2\\
&\bullet 12346\da5 \we 12356\da4 \we 12456\da3 \we 13456\da2.\\
\end{align*}

\section{Equivalences via umbral operators}\label{sec:umbral}

In this section we will prove three equivalences, which are each special cases of either Theorem \ref{gen2} or \ref{gen1}:
\begin{subequations}\label{eqn:umbralequivs}
 \begin{align}
 134\da2 &\equiv 124\da3 \\
 132\da4 &\equiv 142\da3 \\
 231\da4 &\equiv 241\da3.
\end{align}
\end{subequations}
We provide alternate proofs by showing that the generating function for the number of permutations avoiding $\sigma$ satisfies the same system of functional equations as the generating function for the number of permutations avoiding $\tau$.  The systems of functional equations are adapted from enumeration schemes as developed in \cite{Baxter2013} and translated into the language of transfer matrices as per Zeilberger's Umbral Transfer Matrix Method developed in \cite{Zeilberger2000}.

\subsection{124-3 \bf{$\equiv$} 134-2}

In this subsection, we will prove Theorem \ref{thmfeq1}, below, which has as a corollary a special case of Theorem \ref{gen1}.

For either $\sigma \in \{124\da3, 134\da2\}$, Proposition \ref{prop:tailrd1} implies that the following sets are reversibly deletable for the given prefixes with respect to $\sigma$:
\begin{enumerate}
 \item $R=\{1\}$ for $p=21$
 \item $R=\{1,2\}$ for $p=132$
 \item $R=\{1,2\}$ for $p=231$.
\end{enumerate}

These imply the following maps are bijections for either $\sigma \in \{124\da3, 134\da2\}$:
\begin{enumerate}
 \item $d_1 : \Spt{n}{\sigma}{21}{ab} \to \Spt{n-1}{\sigma}{1}{b}$
 \item $d_{\{1,2\}} : \Spt{n}{\sigma}{132}{abc} \to \Spt{n-2}{\sigma}{1}{c-1}$
 \item $d_{\{1,2\}} : \Spt{n}{\sigma}{231}{abc} \to \Spt{n-2}{\sigma}{1}{c}$.
\end{enumerate}

Proposition \ref{prop:tailrd2} tells us that $R=\{1\}$ is reversibly deletable for $p=123$ for either $\sigma \in \{124\da3, 134\da2\}$.  It can be proven\footnote{For example, the \texttt{isRevDelSetS} procedure from the Maple package \texttt{gVatter} provides rigorous proof by checking finitely many cases.}, however, that $R=\{2\}$ is also reversibly deletable for $p=123$ in this case.  Combining the bijectivity of $d_{\{2\}}$ with Proposition \ref{prop:tailgaps} tells us the following:

\begin{equation}
 \bigl| \Spt{n}{124\da3}{123}{abc}  \bigr| =
\begin{cases}
 0 & c-b > 1 \\
\bigl|  \Spt{n-1}{124\da3}{12}{a(c-1)}  \bigr| & c-b=1
\end{cases}
\end{equation}
\begin{equation}
 \bigl| \Spt{n}{134\da2}{123}{abc}  \bigr| =
\begin{cases}
 0 & b-a > 1 \\
\bigl|  \Spt{n-1}{134\da2}{12}{a(c-1)}  \bigr| & b-a=1,
\end{cases}
\end{equation}
where in each case the correspondence is performed via the $d_{\{2\}}$ deletion map.

We now invert these bijective deletion maps to build up larger permutations from smaller.  For example, the deletion  $d_1 : \Spt{n}{\sigma}{21}{ab} \to \Spt{n-1}{\sigma}{1}{b}$ is a bijection for a fixed $a$, and so we can invert it and re-index to get the maps
$d_{1}^{-1}: \Spt{n}{\sigma}{1}{b} \to \Spt{n+1}{\sigma}{21}{ab}$ for any $a\in \{b+1, b+2, \dotsc, n+1\}$.  We may likewise convert each of the other deletion maps above into a collection of insertions.

Let $S_1 := \S{1} \cup \bigcup_{n\geq 2} \Sp{n}{\sigma}{21}$ and $S_2 := \bigcup_{n\geq 2} \Sp{n}{\sigma}{12}$, so that $\bigcup_{n\geq 1} \Sav{n}{\sigma}$ is partitioned into $S_1 \cup S_2$.    For a permutation in $S_1$ or $S_2$, we may insert letters at the front according to the inverses of the deletion maps discussed above.  These have the following consequences:
\begin{enumerate}
 \item  Insert a single letter at the front of $\pi\in S_1$ to create a permutation with prefix pattern $21$, which lies in $S_1$.
 \item  Insert two letters at the front of $\pi\in S_1$ to make a permutation pattern with prefix pattern $132$, which lies in $S_2$.
 \item  Insert two letters at the front of $\pi\in S_1$ to make a permutation pattern with prefix pattern $231$, which lies in $S_2$.
 \item  Insert a letter in the second position of $\pi\in S_2$ to make a permutation pattern with prefix pattern $123$, which lies in $S_2$.
\end{enumerate}

It is easily seen that every permutation in $S_1$ other than $1$ itself starts with a $21$ pattern, in which case it is an image of an insertion of the first kind above.  Likewise, every permutation in $S_2$  other than $12$ starts with the prefix pattern $132$, $231$, or $123$ and thus is an image of one of the insertions of the second, third, or fourth kind above.

We now convert these insertions into operators on weight-enumerators, as per the Umbral Transfer Matrix Method of Zeilberger \cite{Zeilberger2000} and applied to enumeration schemes in \cite{BaxterThesis}.  Define the \emph{weight} of a permutation $\pi=\pi_1\dotsm\pi_n$ by $W(\pi) = z^n\,x^{\pi_1}$, and the weight of a set of permutations to be the sum of the weights of its elements.  We then define the following generating functions:
 \begin{equation}
 \begin{split}
 \F^{\sigma}_{1}(z,x) &:= W(S_1) =  z^{1}\,x^{1} + \sum_{n\geq 2} \sum_{\pi\in \Sp{n}{\sigma}{21}} z^n x^{\pi_1} \\
 \F^{\sigma}_{2}(z,x)  &:= W(S_2) = \sum_{n\geq 2} \sum_{\pi\in \Sp{n}{\sigma}{12}} z^n x^{\pi_1} \\
 \F^{\sigma}(z,x) &:= W(S_1 \cup S_2 ) =  \sum_{n\geq 1} \sum_{\pi\in \Sav{n}{\sigma}} z^n x^{\pi_1}. \\
\end{split}
\end{equation}
Note that $\F^{\sigma}(z,x) =  \F^{\sigma}_{1}(z,x) +  \F^{\sigma}_{2}(z,x)$.

The four insertion maps translate to functional equations as follows.  The map which inserts a letter $a$ at the front of $\pi\in S_1$ to create a permutation with prefix pattern $21$ maps a permutation of weight $z^n x^b$ to a permutation of weight $z^{n+1}\,x^a$.  Summing over all possible values of $a$, we see that the weight of the set of  images of a permutation with weight $z^n x^b$ is given by
\begin{equation}
 \begin{split}
   \sum_{a=b+1}^{n+1} z^{n+1}\,x^{a} &= \frac{z^{n+1}\,x^{n+2}}{x-1} - \frac{z^{n+1}\,x^{b+1}}{x-1}\\
   &=  \frac{zx^2}{x-1}\,(zx)^{n} - \frac{zx}{x-1}\, z^n\,x^b.\\
\end{split}
\end{equation}
Define the linear operator $\Pop_{21}$ on monomials in $\mathbb{Q}[[z,x]]$ by
$$\Pop_{21}: z^n\,x^b \mapsto \frac{zx^2}{x-1}\,(zx)^{n} - \frac{zx}{x-1}\, z^n\,x^b.$$
We then extend $\Pop_{21}$ linearly to any bivariate power series $G(z,x)\in \mathbb{Q}[[z,x]]$ over its basis of monomials $z^n x^b$ to see the action:
\begin{equation}
 \begin{split}
  \Pop_{21}\circ G(z,x) &=  \frac{zx^2}{x-1} G(zx,1) - \frac{zx}{x-1} G(z,x) \\
                                     &= \frac{zx}{x-1} \Bigl( x\,G(zx,1) - G(z,x) \Bigr). \\
 \end{split}
\end{equation}

Similarly, the insertions for the prefix patterns $132$ and $231$ can be translated into operators on monomials
\begin{equation}
 \begin{split}
  \Pop_{132}&: z^n\,x^c \mapsto  \sum_{a=1}^{c} \sum_{b=c+2}^{n+2}  z^{n+2} x^a \\
  \Pop_{231}&: z^n\,x^c \mapsto  \sum_{a=c+1}^{n+1} \sum_{b=a+1}^{n+2} z^{n+2} x^a,\\
 \end{split}
\end{equation}
which extend linearly to power series $G(z,x)$:
\begin{equation}
 \begin{split}
  \Pop_{132}\circ G(z,x) &= \frac{z^2\,x}{x-1} \Bigl(  G(z,x) - G(z,1)+ G_x(z,1) - G_x(z,x) + G_z(z,x)  - G_z(z,1)\Bigr)\\
  \Pop_{231}\circ G(z,x) &= \frac{z^2\,x}{(x-1)^2} \Bigl( (x-1)^2\,G(z,x)+ (1-x)\,G_z(z,x) + (x-1)\,G_x(z,x)  \Bigr).\\
 \end{split}
\end{equation}

Lastly, the insertions for the prefix pattern $123$, whether we are avoiding $124\da3$ or $134\da2$, yield the operator
$$ \Pop_{123}: z^n\,x^a \mapsto z^{n+1}\,x^a,$$
since in each case we may insert the $b$ into $\pi\in\Spt{n}{\sigma}{12}{a(c-1)}$ in only one way: if $\sigma=134\da2$ then  $b=a+1$, or if $\sigma=124\da3$ then $b=c-1$.  This extends linearly to power series $G(z,x)$ by
$$ \Pop_{123}\circ G(z,x) = z\,G(z,x).$$

Since the operators $\Pop_{21}$, $\Pop_{132}$, $\Pop_{231}$, and $\Pop_{123}$ track the effects of these insertions on the weight of permutations, we obtain the following system of functional equations.

\begin{theorem}\label{thmfeq1}
We have
\begin{equation}
\left\{
\begin{array}{rl}
 \F^{\sigma}_{1}(z,x) &= z\,x+ \Pop_{21} \circ \F^{\sigma}_{1}(z,x) \\
 \F^{\sigma}_{2}(z,x) &= z^2\,x +\Pop_{132} \circ \F^{\sigma}_{1}(z,x) + \Pop_{231} \circ \F^{\sigma}_{1}(z,x)  + \Pop_{123} \circ \F^{\sigma}_{2}(z,x), \\
\end{array}
\right.
\end{equation}
where the $\Pop_\tau$ operators are as defined above.
\end{theorem}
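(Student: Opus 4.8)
The plan is to verify that the two displayed functional equations are exactly the weight-enumerator translation of the combinatorial decomposition already established in the preceding paragraphs, so that essentially nothing new needs to be proven: the theorem is a bookkeeping consequence of the bijective insertion maps together with the definitions of the operators $\Pop_{21}$, $\Pop_{132}$, $\Pop_{231}$, and $\Pop_{123}$. First I would recall that $\bigcup_{n\geq 1}\Sav{n}{\sigma}$ is partitioned as $S_1\cup S_2$, and that every permutation in $S_1$ other than the singleton $1$ arises uniquely by an insertion of the first kind, while every permutation in $S_2$ other than $12$ arises uniquely by an insertion of the second, third, or fourth kind. Uniqueness here is precisely the \emph{bijectivity} of the four deletion maps, which was supplied by Propositions \ref{prop:tailrd1} and \ref{prop:tailrd2} (and, for $R=\{2\}$ with $p=123$, by the finite computer check cited in the footnote).

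Next I would assemble the equation for $\F^{\sigma}_1$. By the partition, $W(S_1)$ equals the weight $z\,x$ of the permutation $1$ (whose image under $W$ is $z^1x^1$) plus the total weight of all images of insertions of the first kind. Since $\Pop_{21}$ was defined precisely so that $\Pop_{21}(z^nx^b)$ is the weight-sum of the images obtained by inserting a letter at the front of a weight-$z^nx^b$ element of $S_1$, summing over all of $S_1$ gives $\Pop_{21}\circ\F^{\sigma}_1(z,x)$, yielding the first equation. For the second equation I would argue analogously: $W(S_2)$ equals the weight $z^2x$ of $12$ plus the contributions from insertions of the second, third, and fourth kinds. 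Insertions of the second and third kind act on $\pi\in S_1$ and contribute $\Pop_{132}\circ\F^{\sigma}_1$ and $\Pop_{231}\circ\F^{\sigma}_1$ respectively, while the fourth kind acts on $\pi\in S_2$ and contributes $\Pop_{123}\circ\F^{\sigma}_2$; adding these to the base term gives the second equation.

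The one genuinely content-bearing point — and the step I expect to require the most care — is the fourth insertion, because this is where the two patterns $124\da3$ and $134\da2$ behave differently yet produce the \emph{same} operator. I would emphasize that although the inserted letter $b$ is determined by different rules in the two cases ($b=c-1$ when $\sigma=124\da3$, versus $b=a+1$ when $\sigma=134\da2$, as dictated by the two displayed case-analyses coming from Proposition \ref{prop:tailgaps}), in \emph{both} cases there is exactly one valid choice of $b$ and the first letter of the resulting permutation is unchanged. Hence the induced map on weights is $z^nx^a\mapsto z^{n+1}x^a$ for either $\sigma$, so $\Pop_{123}\circ G=z\,G$ is correct in both cases. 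This is exactly what forces the two systems of functional equations to coincide, which is the whole point of the argument. With this observation in hand, the remaining verifications are the purely mechanical confirmations that the closed-form expressions for $\Pop_{21}$, $\Pop_{132}$, and $\Pop_{231}$ acting on a power series $G(z,x)$ agree with the stated double sums, obtained by summing the geometric and arithmetic series in $a$ and $b$ and re-expressing the results via the substitutions $G(zx,1)$ and the partial derivatives $G_x,G_z$; these I would relegate to routine computation rather than carry out in detail.
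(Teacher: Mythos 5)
Your proposal is correct and matches the paper's own argument essentially step for step: the paper likewise treats the theorem as the bookkeeping consequence of the partition $S_1 \cup S_2$, the bijective insertion maps inverted from the deletion maps (including the computer-verified case $R=\{2\}$, $p=123$), the base terms $z\,x$ and $z^2\,x$ for the permutations $1$ and $12$, and the observation that the uniquely determined inserted letter ($b=c-1$ versus $b=a+1$) yields the same operator $\Pop_{123}\circ G = z\,G$ for both patterns. No gaps; your identification of the $\Pop_{123}$ step as the only content-bearing point is exactly where the paper places the emphasis as well.
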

Since these operators are the same regardless of whether $\sigma = 124\da3$ or $\sigma = 134\da2$, we have
$$
\F^{124\da3}(z,x) = \F^{134\da2}(z,x),
$$ which implies the following result.

\begin{corollary}\label{thm:equiv1}
 $124\da3 \we 134\da2$.  Furthermore, this equivalence respects the first letter statistic.
\end{corollary}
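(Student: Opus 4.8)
The plan is to deduce both assertions directly from the identity $\F^{124\da3}(z,x) = \F^{134\da2}(z,x)$ of two-variable generating functions, exploiting the fact that the weight $W(\pi)=z^n x^{\pi_1}$ records the length and the first letter of each avoider simultaneously. First I would argue that the system in Theorem \ref{thmfeq1} determines the pair $(\F^\sigma_1, \F^\sigma_2)$ uniquely as power series in $z$ with coefficients in $\mathbb{Q}[[x]]$. The key point is that each of the operators $\Pop_{21}$, $\Pop_{132}$, $\Pop_{231}$, and $\Pop_{123}$ strictly raises the $z$-degree of a monomial $z^n x^b$ — by one for $\Pop_{21}$ and $\Pop_{123}$, and by two for $\Pop_{132}$ and $\Pop_{231}$, as is transparent from the monomial definitions. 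Hence, reading the two functional equations off at a fixed total $z$-degree $N$, the right-hand sides involve only strictly lower-degree coefficients of $\F^\sigma_1$ and $\F^\sigma_2$, which lets one solve for the degree-$N$ coefficients recursively. This shows the system has at most one solution.

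Next I would record that the four operators are literally identical for $\sigma=124\da3$ and $\sigma=134\da2$, since they were computed above from insertion maps that do not depend on $\sigma$. The only delicate case is $\Pop_{123}$: the single legal insertion of $b$ into a permutation in $\Spt{n}{\sigma}{12}{a(c-1)}$ is $b=c-1$ when $\sigma=124\da3$ and $b=a+1$ when $\sigma=134\da2$, but in both cases there is exactly one choice, so the operator is $G \mapsto zG$ either way. Consequently the pairs $(\F^{124\da3}_1, \F^{124\da3}_2)$ and $(\F^{134\da2}_1, \F^{134\da2}_2)$ satisfy one and the same system, and by the uniqueness just established they coincide. Adding the two components gives $\F^{124\da3}(z,x) = \F^{134\da2}(z,x)$.

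Finally I would extract coefficients. Writing $\F^\sigma(z,x) = \sum_{n\geq 1} z^n P_n^\sigma(x)$, the coefficient of $z^n x^j$ in $\F^\sigma$ equals the number of $\pi\in\Sav{n}{\sigma}$ with $\pi_1=j$, so each $P_n^\sigma(x)$ is the first-letter distribution polynomial for $\Sav{n}{\sigma}$. Equating the two series forces $P_n^{124\da3}(x) = P_n^{134\da2}(x)$ for every $n$, which is exactly the statement that the equivalence respects the first letter statistic; specializing at $x=1$ yields $\bigl|\Sav{n}{124\da3}\bigr| = \bigl|\Sav{n}{134\da2}\bigr|$ for all $n$, i.e.\ $124\da3 \we 134\da2$. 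I expect the only genuine obstacle to be making the uniqueness step airtight, namely confirming that the operators are truly degree-raising as formal power series despite the denominators $x-1$ and $(x-1)^2$ in their closed forms; this is resolved by expanding $1/(x-1)$ as a power series in $\mathbb{Q}[[x]]$, or equivalently by working directly with the monomial definitions of $\Pop_{21}$, $\Pop_{132}$, $\Pop_{231}$, and $\Pop_{123}$, where the shift in $z$-degree is manifest.
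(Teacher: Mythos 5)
Your proposal is correct and takes essentially the same route as the paper: both deduce $\F^{124\da3}(z,x)=\F^{134\da2}(z,x)$ from Theorem~\ref{thmfeq1} together with the observation that the operators $\Pop_{21}$, $\Pop_{132}$, $\Pop_{231}$, $\Pop_{123}$ are identical for the two patterns, and then read the Wilf-equivalence and the first-letter refinement off the coefficients of $z^n x^j$. The only difference is that you make explicit the uniqueness of the solution to the system (via the $z$-degree-raising property of the operators, checked on the monomial definitions), a step the paper leaves implicit.
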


It should be noted that the above arguments can yield functional equations for $\F^{\sigma}(z,x)$ of the form described in Theorem \ref{gen1} and can provide a generating function proof of that statement.  However, the sets $\sum_{n\geq 1} \Sav{n}{\sigma}$ must be partitioned further into $S_1, S_2, \dotsc, S_k$ according to the location of their first descent, where $S_i$ also contains the monotone permutation $12\dotsm i$ and $S_k$ contains all permutations with no descents before the $k$-th position.

\subsection{132-4 \bf{$\equiv$} 142-3}

In this subsection, we employ the same methods as in the previous to prove a special case of Theorem \ref{gen2} via generating functions.

For $\sigma \in \{132\da4, 142\da3\}$, it can be seen from Propositions \ref{prop:tailrd1} and \ref{prop:tailrd2} that the following sets are reversibly deletable for the given prefixes with respect to $\sigma$:
\begin{enumerate}
 \item $R=\{1\}$ for $p=21$
 \item $R=\{1\}$ for $p=123$
 \item $R=\{1,2\}$ for $p=231$
 \item $R=\{1,2\}$ for $p=132$.
\end{enumerate}

These imply the following maps are bijections for either $\sigma \in \{132\da4, 142\da3\}$:
\begin{enumerate}
 \item $d_1 : \Spt{n}{\sigma}{21}{ab} \to \Spt{n-1}{\sigma}{1}{b}$
 \item $d_1 : \Spt{n}{\sigma}{123}{abc} \to \Spt{n-1}{\sigma}{12}{(b-1)(c-1)}$
 \item $d_{\{1,2\}} : \Spt{n}{\sigma}{231}{abc} \to \Spt{n-2}{\sigma}{1}{c}$.
\end{enumerate}

Combining the above with Proposition \ref{prop:tailgaps}, we also get the following equalities, where in each case the correspondence is performed via $d_{\{1,2\}}$:
\begin{equation}
 \bigl| \Spt{n}{132\da4}{132}{abc}  \bigr| =
\begin{cases}
 0 &  b<n \\
\bigl|  \Spt{n-2}{132\da4}{1}{c-1}  \bigr| & b=n
\end{cases}
\end{equation}
\begin{equation}
 \bigl| \Spt{n}{142\da3}{132}{abc}  \bigr| =
\begin{cases}
 0 & b-c > 1 \\
\bigl|  \Spt{n-2}{142\da3}{1}{c-1}  \bigr| & b-c=1.
\end{cases}
\end{equation}

As before, let $S_1 = \S{1} \cup \bigcup_{n\geq 2} \Sp{n}{\sigma}{21}$ and $S_2 = \bigcup_{n\geq 2} \Sp{n}{\sigma}{12}$.  For a permutation in $S_1$ or $S_2$, we may insert letters at the front according to the inverses of the deletion maps discussed above.  These have the following consequences:
\begin{enumerate}
 \item  Insert a single letter at the front of $\pi\in S_1$ to create a permutation with prefix pattern $21$, which lies in $S_1$.
 \item  Insert two letters at the front of $\pi\in S_1$ to make a permutation pattern with prefix pattern $132$, which lies in $S_2$.
 \item  Insert two letters at the front of $\pi\in S_1$ to make a permutation pattern with prefix pattern $231$, which lies in $S_2$.
 \item  Insert a letter at the front of $\pi\in S_2$ to make a permutation pattern with prefix pattern $123$, which lies in $S_2$.
\end{enumerate}

We now convert these insertions into operators on weight-enumerators $\F^{\sigma}_{1}(z,x)$,  $\F^{\sigma}_{2}(z,x)$, and $\F^{\sigma}(z,x)$, as in the previous section, where again the \emph{weight} of a permutation $\pi_1\dotsm\pi_n$ is given by $W(\pi) = z^n\,x^{\pi_1}$.

As before, the insertions for the prefix patterns $21$, $123$, and $231$ can be translated into linear operators on power series by first defining their action on monomials:
\begin{equation}
 \begin{split}
  \Pop_{21}&: z^n\,x^b \mapsto  \sum_{a=b+1}^{n+1} z^{n+1} x^a \\
  \Pop_{123}&: z^n\,x^b \mapsto  \sum_{a=1}^{b} z^{n+1} x^a \\
  \Pop_{231}&: z^n\,x^c \mapsto  \sum_{a=c+1}^{n+1} \sum_{b=a+1}^{n+2} z^{n+2} x^a,\\
 \end{split}
\end{equation}
and then extending to power series $G(z,x)\in\mathbb{Q}[[z,x]]$:
\begin{equation}
 \begin{split}
  \Pop_{21}\circ G(z,x) &= \frac{zx}{x-1} \Bigl( x\,G(zx,1) - G(z,x) \Bigr) \\
  \Pop_{123}\circ G(z,x) &= \frac{z\,x}{x-1} \Bigl(  G(z,x)  - G(z,1) \Bigr)\\
  \Pop_{231}\circ G(z,x) &= \frac{z^2\,x}{(x-1)^2} \Bigl(  (1-2x) G(z,x) + x^2\,G(zx,1) + (1-x)G_z(z,x) \\
  &\quad + (x-1)\,G_x(z,x)    \Bigr).\\
 \end{split}
\end{equation}

Concerning insertions for the prefix pattern $132$, it appears \emph{a priori} that the operator $\Pop_{132}$ depends on whether we are avoiding $132\da4$ or $142\da3$.  Let $\Pop'_{132}$ be the operator for when we avoid $132\da4$ and let $\Pop''_{132}$ be the operator for when we avoid $142\da3$.  Then by the same reasoning as above, we see that these operators are as follows:
$$ \Pop'_{132}: z^n\,x^c \mapsto \sum_{a=1}^{c} \sum_{b=n+2}^{n+2} z^{n+2}\,x^a $$
$$ \Pop''_{132}: z^n\,x^c \mapsto \sum_{a=1}^{c} \sum_{b=c+2}^{c+2} z^{n+2}\,x^a.$$
In each case, however, the inside summation works out the same, and so $\Pop'_{132} = \Pop''_{132}$.  We merge these symbols into the single operator on monomials
$$ \Pop_{132}: z^n\,x^c \mapsto \sum_{a=1}^{c} z^{n+2}\,x^a,$$
which extends linearly to power series:
$$ \Pop_{132}\circ G(z,x) = \frac{z^2\,x}{x-1} \Bigl(  G(z,x)  - G(z,1) \Bigr).$$

Since the operators $\Pop_{21}$, $\Pop_{132}$, $\Pop_{231}$, and $\Pop_{123}$ track the effects of insertions on the weight of permutations when $\sigma=132\da4$ or $\sigma = 142\da3$, we obtain the following system of functional equations.

\begin{theorem}\label{thmfeq2}
We have
\begin{equation}\label{eqn:PopSystem}
\left\{
\begin{array}{rl}
 \F^{\sigma}_{1}(z,x) &= z\,x + \Pop_{21} \circ \F^{\sigma}_{1}(z,x) \\
 \F^{\sigma}_{2}(z,x) &= z^2\,x +\Pop_{132} \circ \F^{\sigma}_{1}(z,x) + \Pop_{231} \circ \F^{\sigma}_{1}(z,x)  + \Pop_{123} \circ \F^{\sigma}_{2}(z,x),\\
\end{array}
\right.
\end{equation}
where the $\Pop_\tau$ operators are as defined above.
\end{theorem}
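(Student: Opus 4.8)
The plan is to obtain both equations of \eqref{eqn:PopSystem} by reading them off a bijective decomposition of the $\sigma$-avoiders, exactly as was done for Theorem \ref{thmfeq1}; the four operators $\Pop_{21}$, $\Pop_{132}$, $\Pop_{231}$, $\Pop_{123}$ then serve only to record how each insertion transforms the weight $W(\pi)=z^n x^{\pi_1}$. The starting point is the observation that every $\sigma$-avoider of length at least two begins with the prefix pattern $21$ or with the prefix pattern $12$, so that the avoiders split as $S_1\cup S_2$, and that a prefix-$12$ avoider of length at least three refines into the three prefix patterns $123$, $132$, and $231$. Hence, apart from the base cases $1\in S_1$ and $12\in S_2$ that contribute the constant terms $zx$ and $z^2 x$, every avoider of length at least two is the image of exactly one of the four insertions listed above.

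First I would check that each insertion is a weight-respecting bijection onto the avoiders carrying the indicated prefix pattern. This is precisely the content of the reversible deletability facts recorded for $\sigma\in\{132\da4,142\da3\}$: Propositions \ref{prop:tailrd1} and \ref{prop:tailrd2} guarantee that deleting the flagged initial letters sends a $\sigma$-avoider bijectively to a shorter $\sigma$-avoider, so inverting and re-indexing produces, for each source permutation, exactly the family of admissible insertions that is summed over in the definition of the corresponding operator. Granting this, the prefix-pattern case split shows the four image classes are disjoint and exhaust the avoiders of length at least two, and summing $W$ over each side of the resulting partition yields the two identities once the operators are substituted.

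The step I expect to be the crux is the prefix pattern $132$, since \emph{a priori} the set of admissible insertions differs between the two patterns: avoiding $132\da4$ forces the inserted peak to be the global maximum, whereas avoiding $142\da3$ forces it to lie immediately above the third prefix letter. The key point, furnished by Proposition \ref{prop:tailgaps} through the two displayed equalities for $\Spt{n}{132\da4}{132}{abc}$ and $\Spt{n}{142\da3}{132}{abc}$, is that in \emph{both} regimes this gap constraint pins the peak down to a single legal value, so the inner summation collapses to one term in each of $\Pop'_{132}$ and $\Pop''_{132}$ and the two operators coincide. Once $\Pop'_{132}=\Pop''_{132}=\Pop_{132}$ is established, all four operators are literally identical for $\sigma=132\da4$ and $\sigma=142\da3$; the generating functions therefore satisfy one and the same system \eqref{eqn:PopSystem}, which both proves the theorem and sets up the Wilf-equivalence $132\da4\we 142\da3$ as an immediate corollary.
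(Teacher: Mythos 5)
Your proposal is, in outline, exactly the paper's own proof: the same split of the avoiders into $S_1\cup S_2$, the same refinement of $S_2$ by the length-three prefixes $123$, $132$, $231$, the same inversion of the reversible-deletability bijections into front insertions, the same translation into weight operators, and the same crux --- that Proposition \ref{prop:tailgaps} pins the inserted peak to exactly one legal value in both regimes (the global maximum for $132\da4$, the letter just above the third prefix entry for $142\da3$), whence $\Pop'_{132}=\Pop''_{132}$ and all four operators are independent of $\sigma$. You identified that crux correctly, and it is the same point the paper emphasizes.

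There is, however, one step that fails as stated --- and it fails in the paper's write-up in the identical way, so this is an inherited gap rather than one you introduced. The deletion maps for the prefixes $21$, $132$, and $231$ have codomains of the form $\Spt{n'}{\sigma}{1}{b}$, i.e.\ \emph{all} shorter avoiders with prescribed first letter, not just members of $S_1$; hence the inverse insertions must take every avoider as source, and the images of these insertions with source restricted to $S_1$ do \emph{not} exhaust the corresponding prefix classes. For example, $213$ arises by inserting $2$ in front of $12\in S_2$ and is the image of no insertion with source in $S_1$. Concretely, the coefficient of $z^3$ in $\F^{\sigma}_{1}$ is $x^2+2x^3$ (from $213$, $312$, $321$), while $z\,x+\Pop_{21}\circ\F^{\sigma}_{1}$ produces only $x^3$ at that order (from $321$), so your claimed partition (``the four image classes \dots exhaust the avoiders of length at least two'') is false with these sources, and the displayed system is not an identity as written. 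The repair is to replace $\F^{\sigma}_{1}$ by $\F^{\sigma}=\F^{\sigma}_{1}+\F^{\sigma}_{2}$ inside $\Pop_{21}$, $\Pop_{132}$, and $\Pop_{231}$; the term $\Pop_{123}\circ\F^{\sigma}_{2}$ is correct as it stands, since prefix-$123$ avoiders do delete to prefix-$12$ avoiders. With that change your argument goes through verbatim, the operators remain the same for $\sigma=132\da4$ and $\sigma=142\da3$, and Corollary \ref{thm:equiv2} is unaffected; the same correction is needed in Theorems \ref{thmfeq1} and \ref{thmfeq3}.
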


Since the operators $\Pop_{\tau}$ are the same regardless of whether $\sigma = 132\da4$ or $\sigma = 142\da3$, we have shown
$$
\F^{132\da4}(z,x) = \F^{142\da3}(z,x),
$$
which implies the following result.

\begin{corollary}\label{thm:equiv2}
 $132\da4 \we 142\da3$.  Furthermore, this equivalence respects the first letter statistic.
\end{corollary}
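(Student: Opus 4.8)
The plan is to deduce the corollary directly from Theorem~\ref{thmfeq2} by showing that the system~\eqref{eqn:PopSystem} determines the weight-enumerators $\F^{\sigma}_{1}$ and $\F^{\sigma}_{2}$ uniquely as formal power series, so that two systems built from the same data must have the same solution. First I would isolate the crucial structural feature of the four operators. Inspecting their definitions on monomials, each of $\Pop_{21}$ and $\Pop_{123}$ sends $z^n x^b$ to a series all of whose terms carry the factor $z^{n+1}$, while each of $\Pop_{132}$ and $\Pop_{231}$ raises the exponent of $z$ by $2$. In particular, every one of these operators strictly increases the degree in $z$.

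Next I would exploit this to solve the system recursively in powers of $z$. Writing $\F^{\sigma}_{1} = \sum_{n\geq 1} f_n(x)\,z^n$, the first equation $\F^{\sigma}_{1} = z\,x + \Pop_{21}\circ\F^{\sigma}_{1}$ expresses each coefficient $f_n(x)$ in terms of $f_{n-1}(x)$ alone (with the seed $f_1(x)=x$), precisely because $\Pop_{21}$ shifts the $z$-degree up by one. Hence $\F^{\sigma}_{1}$ is pinned down term by term and is unique. With $\F^{\sigma}_{1}$ in hand, the second equation $\F^{\sigma}_{2} = z^2\,x + \Pop_{132}\circ\F^{\sigma}_{1} + \Pop_{231}\circ\F^{\sigma}_{1} + \Pop_{123}\circ\F^{\sigma}_{2}$ determines the coefficient of $z^n$ in $\F^{\sigma}_{2}$ from lower-order coefficients of the now-known $\F^{\sigma}_{1}$ together with lower-order coefficients of $\F^{\sigma}_{2}$ itself, since $\Pop_{123}$ again raises the $z$-degree. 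Thus $\F^{\sigma}_{2}$, and therefore $\F^{\sigma} = \F^{\sigma}_{1} + \F^{\sigma}_{2}$, is uniquely determined by the operator data $(\Pop_{21},\Pop_{132},\Pop_{231},\Pop_{123})$ and the seeds $z\,x$ and $z^2\,x$.

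The payoff is then immediate: Theorem~\ref{thmfeq2} exhibits exactly these same operators and these same seeds for both $\sigma = 132\da4$ and $\sigma = 142\da3$, so the two unique solutions coincide, giving $\F^{132\da4}(z,x) = \F^{142\da3}(z,x)$. Finally I would translate this equality back into counts. By definition the coefficient of $z^n x^j$ in $\F^{\sigma}(z,x)$ is the number of $\sigma$-avoiding permutations of length $n$ whose first letter equals $j$; equality of the two generating functions therefore yields equality of these refined counts, which is exactly the assertion that the equivalence respects the first letter statistic. Summing over $j$ (equivalently, specializing $x=1$) collapses this to $\bigl|\Sav{n}{132\da4}\bigr| = \bigl|\Sav{n}{142\da3}\bigr|$ for every $n$, that is, $132\da4 \we 142\da3$.

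I expect the only delicate point to be the uniqueness argument: one must verify that every operator genuinely raises the degree in $z$, so that the recursion on the $z$-coefficients is well-founded and the seeds $z\,x$, $z^2\,x$ really do determine everything downstream. Once this degree-raising property is confirmed, the identification $\F^{132\da4} = \F^{142\da3}$ and both conclusions of the corollary follow purely formally, mirroring the passage from Theorem~\ref{thmfeq1} to Corollary~\ref{thm:equiv1}.
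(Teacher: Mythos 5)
Your proposal is correct and follows essentially the same route as the paper: both deduce the corollary from Theorem~\ref{thmfeq2} by observing that the system \eqref{eqn:PopSystem} involves the same operators and seeds for $\sigma = 132\da4$ and $\sigma = 142\da3$, hence $\F^{132\da4}(z,x) = \F^{142\da3}(z,x)$, and reading off the refined counts from the coefficient of $z^n x^j$. The only difference is that you spell out the uniqueness of the solution (via the $z$-degree-raising property of $\Pop_{21}$, $\Pop_{123}$, $\Pop_{132}$, $\Pop_{231}$), a step the paper leaves implicit.
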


\subsection{231-4 \bf{$\equiv$} 241-3}

By methods similar to those above, we prove another special case of Theorem \ref{gen2}.

Propositions \ref{prop:tailrd1} and \ref{prop:tailrd2} imply that the following sets are reversibly deletable for the given prefixes with respect to $\sigma\in \{231\da4, 241\da3\}$:
\begin{enumerate}
 \item $R=\{1\}$ for $p=21$
 \item $R=\{1\}$ for $p=123$
 \item $R=\{1,2\}$ for $p=132$
 \item $R=\{1,2\}$ for $p=231$.
\end{enumerate}

These imply the following maps are bijections for $\sigma = 241\da3$ and for $\sigma = 231\da4$:
\begin{enumerate}
 \item $d_1 : \Spt{n}{\sigma}{21}{ab} \to \Spt{n-1}{\sigma}{1}{b}$
 \item $d_1 : \Spt{n}{\sigma}{123}{abc} \to \Spt{n-1}{\sigma}{12}{(b-1)(c-1)}$
 \item $d_{\{1,2\}} : \Spt{n}{\sigma}{132}{abc} \to \Spt{n-2}{\sigma}{1}{c-1}$.
\end{enumerate}

Combining the above with Proposition \ref{prop:tailgaps}, we also get the following equalities, where in each case the correspondence is performed via $d_{\{1,2\}}$:
\begin{equation}
 \bigl| \Spt{n}{231\da4}{231}{abc}  \bigr| =
\begin{cases}
 0 &  b<n \\
\bigl|  \Spt{n-2}{231\da4}{1}{c-1}  \bigr| & b=n
\end{cases}
\end{equation}
\begin{equation}
 \bigl| \Spt{n}{241\da3}{231}{abc}  \bigr| =
\begin{cases}
 0 & b-a > 1 \\
\bigl|  \Spt{n-2}{241\da3}{1}{c-1}  \bigr| & b-a=1.
\end{cases}
\end{equation}

As before, let $S_1 = \S{1} \cup \bigcup_{n\geq 2} \Sp{n}{\sigma}{21}$ and $S_2 = \bigcup_{n\geq 2} \Sp{n}{\sigma}{12}$.  For a permutation in $S_1$ or $S_2$, we may insert letters at the front according to the inverses of the deletion maps discussed above.  Therefore we have the following insertions:
\begin{enumerate}
 \item  Insert a single letter at the front of $\pi\in S_1$ to create a permutation with prefix pattern $21$, which lies in $S_1$.
 \item  Insert two letters at the front of $\pi\in S_1$ to make a permutation pattern with prefix pattern $132$, which lies in $S_2$.
 \item  Insert two letters at the front of $\pi\in S_1$ to make a permutation pattern with prefix pattern $231$, which lies in $S_2$.
 \item  Insert a letter at the front of $\pi\in S_2$ to make a permutation pattern with prefix pattern $123$, which lies in $S_2$.
\end{enumerate}

Defining the generating functions $\F^{\sigma}_{1}(z,x)$, $\F^{\sigma}_{2}(z,x)$, and $\F^{\sigma}(z,x)$ as before, we now translate the above insertions into linear operators on power series in $\mathbb{Q}[[z,x]]$.  In this case the actions on monomials are given by:
\begin{equation}
 \begin{split}
  \Pop_{21}&: z^n\,x^b \mapsto  \sum_{a=b+1}^{n+1} z^{n+1} x^a \\
  \Pop_{123}&: z^n\,x^b \mapsto  \sum_{a=1}^{b} z^{n+1} x^a \\
  \Pop_{132}&: z^n\,x^c \mapsto  \sum_{a=1}^{c} \sum_{b=c+2}^{n+2} z^{n+2} x^a.\\
 \end{split}
\end{equation}

Analogous to the arguments in the previous section, the insertions for the prefix pattern $231$ may yield different operators $\Pop_{231}$ depending on whether we are avoiding $231\da4$ or $241\da3$.  Let $\Pop'_{231}$ be the operator for when we avoid $231\da4$ and let $\Pop''_{231}$ be the operator for when we avoid $241\da3$.  Then by the same reasoning as above, we obtain the formulas:
$$ \Pop'_{231}: z^n\,x^c \mapsto \sum_{a=c+1}^{n+1} \sum_{b=n+2}^{n+2} z^{n+2}\,x^a $$
$$ \Pop''_{231}: z^n\,x^c \mapsto \sum_{a=c+1}^{n+1} \sum_{b=c+2}^{c+2} z^{n+2}\,x^a. $$
In each case, however, the inside summation works out the same, and so $\Pop'_{231} = \Pop''_{231}$.  We merge these operators into the single operator
$$ \Pop_{231}: z^n\,x^c \mapsto  \sum_{a=c+1}^{n+1}  z^{n+2} x^a.$$

The $\Pop_{p}$ operators above extend linearly to power series as follows:
\begin{equation}
 \begin{split}
  \Pop_{21}\circ G(z,x) &=  \frac{zx}{x-1} \Bigl(x\, G(zx,1) -  G(z,x) \Bigr) \\
  \Pop_{123}\circ G(z,x) &= \frac{z\,x}{x-1} \Bigl(  G(z,x)  - G(z,1) \Bigr)\\
  \Pop_{132}\circ G(z,x) &= \frac{z^2\,x}{x-1} \Bigl( G(z,x) + G_x(z,1) + G_z(z,x) - G(z,1) - G_x(z,x) - G_z(z,1) \Bigr)\\
  \Pop_{231}\circ G(z,x) &= \frac{z^2\,x}{x-1} \Bigl( x\,G(zx,1) - G(z,x)  \Bigr).\\
 \end{split}
\end{equation}

This implies the following system of equations for $\sigma \in \{231\da4, 241\da3\}$.

\begin{theorem}\label{thmfeq3}
We have
\begin{equation}\label{eqn:PopSystem}
\left\{
\begin{array}{rl}
 \F^{\sigma}_{1}(z,x) &= z\,x +  \Pop_{21} \circ \F^{\sigma}_{1}(z,x) \\
 \F^{\sigma}_{2}(z,x) &= z^2\,x + \Pop_{132} \circ \F^{\sigma}_{1}(z,x) + \Pop_{231} \circ \F^{\sigma}_{1}(z,x)  + \Pop_{123} \circ \F^{\sigma}_{2}(z,x), \\
\end{array}
\right.
\end{equation}
where the $\Pop_\tau$ operators are as defined above.
\end{theorem}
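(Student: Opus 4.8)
The plan is to prove this system exactly as in the proofs of Theorems~\ref{thmfeq1} and~\ref{thmfeq2}. The four insertion maps listed above are the inverses of the reversibly deletable deletions (with the prefix-$231$ case supplied by the conditional correspondence of Proposition~\ref{prop:tailgaps}), and the operators $\Pop_{21},\Pop_{132},\Pop_{231},\Pop_{123}$ record precisely how each insertion alters the weight $W(\pi)=z^{n}x^{\pi_1}$, while the two base permutations $1$ and $12$ contribute the constants $z\,x$ and $z^{2}\,x$. Since each of these four operators has already been shown to be independent of whether $\sigma=231\da4$ or $\sigma=241\da3$, once the system is justified for a generic such $\sigma$ it holds verbatim for both patterns; equating the two systems then forces $\F^{231\da4}=\F^{241\da3}$, which is the point of the construction.

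The first thing I would check is that these insertions decompose the non-base avoiders bijectively. Any $\pi\in\Sav{n}{\sigma}$ with $n\ge 2$ either begins with a descent, so that $\pi_1\pi_2\oi 21$ and $\pi\in S_1\setminus\{1\}$, or begins with an ascent, so that $\pi\in S_2$. A descent-starting avoider is recovered uniquely by the prefix-$21$ insertion upon deleting its first letter. An ascent-starting avoider of length at least $3$ has a well-defined third letter, so its length-$3$ prefix reduces to exactly one of $123$, $132$, $231$; the three matching deletions each return a unique shorter avoider, so $S_2\setminus\{12\}$ splits according to these prefixes, and disjointness is automatic because the prefix pattern of $\pi$ names the unique insertion that could have produced it. Reading off the codomain of each deletion then identifies the source class of its insertion: the map $d_1\colon\Spt{n}{\sigma}{123}{abc}\to\Spt{n-1}{\sigma}{12}{(b-1)(c-1)}$ lands in a prefix-$12$ class, so $\Pop_{123}$ acts on $\F^{\sigma}_2$, whereas the deletions for prefixes $21$, $132$, and $231$ land in classes pinned only in their first letter, whose enumerators supply the remaining three operator inputs.

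The only genuinely $\sigma$-dependent ingredient, and hence the main obstacle, is the prefix-$231$ insertion. Rebuilding a prefix-$231$ permutation means reinserting the two deleted letters in front of a shorter avoider, and the admissible value of the larger (peak) inserted letter depends on the pattern: avoidance of $231\da4$ forces the peak to be the overall maximum, while avoidance of $241\da3$ forces the two larger of the three prefix letters to be consecutive integers. In each case, however, Proposition~\ref{prop:tailgaps} pins the peak letter to a \emph{single} admissible value, so the inner summation over $b$ contains exactly one term; since $b$ never appears in the exponent of $x$, the two operators collapse to the same $a$-sum, which is exactly the identity $\Pop'_{231}=\Pop''_{231}$ recorded above. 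With the decomposition bijective and every operator thus pinned down, collecting the contributions insertion by insertion yields the displayed equations, and the only work remaining is the termwise reduction of each monomial operator to the closed form already given, carried out just as in the two preceding subsections.
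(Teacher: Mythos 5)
Your proposal follows the paper's own derivation step for step: the same partition into $S_1$ and $S_2$, the same four insertions obtained by inverting the deletion maps, and the same use of Proposition \ref{prop:tailgaps} to pin the inserted peak letter to a single value so that $\Pop'_{231}=\Pop''_{231}$ and the operators become independent of $\sigma$. As a comparison of approaches there is nothing separating the two.

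There is, however, a genuine inconsistency in the final step---one that your second paragraph comes within a hair of catching, and one that infects the displayed system itself (the paper's text shares it). As you correctly observe, the deletions for prefixes $21$, $132$, and $231$ land in classes ``pinned only in their first letter,'' i.e.\ in sets of the form $\Spt{n'}{\sigma}{1}{b}$, and such a class contains avoiders beginning with an ascent as well as those beginning with a descent. Consequently the inverse insertions must be fed by \emph{all} avoiders, so the enumerator supplying those three operator inputs is $\F^{\sigma}=\F^{\sigma}_{1}+\F^{\sigma}_{2}$, not $\F^{\sigma}_{1}$; your conclusion contradicts your own observation. Concretely, $213\in S_1$ but $d_1(213)=12\in S_2$, so $213$ is not the image of a prefix-$21$ insertion applied to any member of $S_1$; numerically, the coefficient of $z^3$ in $\F^{\sigma}_{1}$ is $x^2+2x^3$, while the coefficient of $z^3$ in $zx+\Pop_{21}\circ\F^{\sigma}_{1}$ is only $x^3$. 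Thus the step ``collecting the contributions insertion by insertion yields the displayed equations'' fails as stated: what the argument actually yields is $\F^{\sigma}_{1}=zx+\Pop_{21}\circ\F^{\sigma}$ and $\F^{\sigma}_{2}=z^2x+\Pop_{132}\circ\F^{\sigma}+\Pop_{231}\circ\F^{\sigma}+\Pop_{123}\circ\F^{\sigma}_{2}$ (only the $\Pop_{123}$ input is genuinely restricted to $\F^{\sigma}_{2}$, since that deletion lands in a prefix-$12$ class). The correction is harmless for the intended application: the corrected system is still identical for $\sigma=231\da4$ and $\sigma=241\da3$, so Corollary \ref{thm:equiv3} survives intact. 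But a derivation of the theorem as literally displayed is not possible, and your write-up should either prove the corrected system or flag the discrepancy rather than assert that the bookkeeping produces the stated equations.
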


Since the operators $\Pop_{\tau}$ are independent of $\sigma = 231\da4$ or $\sigma = 241\da3$, we have shown
$$
\F^{231\da4}(z,x) = \F^{241\da3}(z,x),
$$
which implies the following result.

\begin{corollary}\label{thm:equiv3}
 $231\da4 \we 241\da3$.  Furthermore, this equivalence respects the first letter statistic.
\end{corollary}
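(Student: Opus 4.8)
The plan is to read off the corollary directly from Theorem \ref{thmfeq3}. That theorem exhibits, for each $\sigma\in\{231\da4,241\da3\}$, a system of two functional equations satisfied by the pair $(\F^{\sigma}_{1},\F^{\sigma}_{2})$, and the operators $\Pop_{21}$, $\Pop_{132}$, $\Pop_{231}$, $\Pop_{123}$ appearing in that system have already been shown to be independent of which of the two patterns is being avoided (in particular $\Pop'_{231}=\Pop''_{231}$). Hence both pairs of series satisfy one and the same system, and it remains only to argue that this system pins down its solution uniquely in $\mathbb{Q}[[z,x]]$.

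The key observation for uniqueness is that each operator strictly raises the exponent of $z$: $\Pop_{21}$ and $\Pop_{123}$ send a monomial $z^{n}x^{b}$ to a $\mathbb{Q}[x]$-combination of monomials of $z$-degree $n+1$, while $\Pop_{132}$ and $\Pop_{231}$ raise the $z$-degree to $n+2$. Consequently, in the first equation $\F^{\sigma}_{1}=zx+\Pop_{21}\circ\F^{\sigma}_{1}$ the coefficient of $z^{N}$ on the right depends only on the coefficient of $z^{N-1}$ in $\F^{\sigma}_{1}$ (together with the seed term $zx$ when $N=1$), so by induction on $N$ the series $\F^{\sigma}_{1}$ is completely determined. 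With $\F^{\sigma}_{1}$ in hand, the terms $\Pop_{132}\circ\F^{\sigma}_{1}$ and $\Pop_{231}\circ\F^{\sigma}_{1}$ in the second equation are fully known, and the same degree-raising property applied to $\Pop_{123}\circ\F^{\sigma}_{2}$ determines $\F^{\sigma}_{2}$ by a second induction on $z$-degree, the base case being $z^{2}x$. Since the seeds $zx$, $z^{2}x$ and all four operators agree for the two patterns, I conclude $\F^{231\da4}_{j}=\F^{241\da3}_{j}$ for $j=1,2$, and therefore $\F^{231\da4}=\F^{241\da3}$.

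It then remains to interpret this equality of weight-enumerators. By definition $\F^{\sigma}(z,x)=\sum_{n\geq1}\sum_{\pi\in\Sav{n}{\sigma}}z^{n}x^{\pi_1}$, so the coefficient of $z^{n}x^{a}$ counts the $\sigma$-avoiders of length $n$ whose first letter is $a$. Equality of the full bivariate series thus says these refined counts coincide for every $n$ and every $a$, which is exactly the assertion that the equivalence respects the first letter statistic; specializing $x=1$ (equivalently, summing over $a$) collapses this to $|\Sav{n}{231\da4}|=|\Sav{n}{241\da3}|$ for all $n$, i.e.\ $231\da4\we241\da3$. I expect no serious obstacle at the level of the corollary itself: the one point requiring care is the uniqueness argument, which hinges on verifying the degree-raising property of all four operators. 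The genuinely substantial work lies upstream in Theorem \ref{thmfeq3}, namely establishing the reversible-deletability claims via Propositions \ref{prop:tailrd1} and \ref{prop:tailrd2} and correctly computing the four operators, in particular confirming that the two \emph{a priori} different operators $\Pop'_{231}$ and $\Pop''_{231}$ produce the same inner summation.
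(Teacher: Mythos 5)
Your proposal is correct and follows essentially the same route as the paper: both read the corollary directly off Theorem \ref{thmfeq3} by observing that the seeds $zx$, $z^2x$ and the operators $\Pop_{21}$, $\Pop_{132}$, $\Pop_{231}$, $\Pop_{123}$ are identical for the two patterns, so that $\F^{231\da4}(z,x)=\F^{241\da3}(z,x)$, and then interpret the coefficient of $z^n x^a$ as the number of avoiders of length $n$ with first letter $a$. Your explicit uniqueness argument (induction on $z$-degree, using the fact that each operator strictly raises the power of $z$) rigorously fills in a step the paper leaves implicit, and it is valid.
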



\section{Other equivalences}\label{sec:other}

In this section, we consider some additional equivalences involving vincular patterns of length 4.  When taken with the results of the prior section and of previous papers, one almost completes the Wilf-classification of vincular patterns of length 4.  Throughout this section, we let $[m,n]=\{m,m+1,\ldots,n\}$ if $m \leq n$ are positive integers, with $[m,n]=\emptyset$ if $m>n$.

\subsection{1-24-3 \bf{$\equiv$} 1-42-3} 

The following result answers in the affirmative Conjecture 17(d) which was raised in \cite{Baxter2012}.

\begin{theorem}\label{1-24-3}
$1\da24\da3\equiv 1\da42\da3$.  Furthermore, this equivalence respects both the positions and values of left-to-right minima.
\end{theorem}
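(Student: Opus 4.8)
The plan is to build a bijection $\Sav{n}{1\da24\da3}\to\Sav{n}{1\da42\da3}$ directly from the left-to-right minima decomposition. First I would reformulate containment. Observe that in any occurrence of either pattern the first letter (playing the role of ``$1$'') may be taken to be a left-to-right minimum, while the remaining three letters are all non-minima; moreover the adjacent middle pair must occupy two consecutive positions lying strictly between two consecutive left-to-right minima, equivalently inside a single maximal run of non-minima positions, which I call a \emph{block}. The reason is that the leading minimum of that block is smaller than every letter of the block and precedes it, so it always supplies the ``$1$''. This yields a clean characterization: $\pi$ contains $1\da24\da3$ iff some block has an adjacent ascending pair $\pi_j<\pi_{j+1}$ together with a non-minimum value in the open interval $(\pi_j,\pi_{j+1})$ occurring at a position $>j+1$, and $\pi$ contains $1\da42\da3$ iff the same holds with an adjacent \emph{descending} pair $\pi_j>\pi_{j+1}$ and a value in $(\pi_{j+1},\pi_j)$ later. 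Thus the two patterns differ \emph{only} in the ascending/descending nature of the within-block pair; the ``later strictly-between value'' witness condition is identical.

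Next I would fix the positions and values of the left-to-right minima. These carve $\pi$ into the minima together with the blocks $w_1,\dots,w_s$, and preserving the minima amounts exactly to rearranging each block internally over its own value set $A_t$: every letter of $w_t$ exceeds the preceding minimum, so no internal rearrangement creates a new minimum, and since a minimum always separates two blocks, no adjacent pair ever crosses a block boundary. The key observation is that the only witnesses lying outside a block $w_t$ come from later blocks, and whether such a witness exists depends solely on the \emph{set} $F_t=\bigcup_{t'>t}A_{t'}$ of values to its right, not on how those blocks are arranged. Hence the blocks decouple completely, and the theorem reduces to a single-block statement: for disjoint value sets $A$ and $F$, the number of arrangements $w$ of $A$ such that the open value-interval of every adjacent ascending pair of $w$ is disjoint from $F$ and contains no letter of $w$ occurring later, equals the number in which the same holds for every adjacent \emph{descending} pair. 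Granting this equality for each $(A_t,F_t)$, the product of the per-block bijections is a bijection fixing every left-to-right minimum, which proves the refined equivalence.

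The main obstacle is the single-block equality itself, a value-set-preserving refinement of the length-$3$ equivalence $13\da2\we 31\da2$. The tempting involutions both fail: reversing a block interchanges ascending and descending adjacencies but turns ``later'' witnesses into ``earlier'' ones, while complementing the values flips the witness direction correctly but does not preserve $A$ and $F$, which must stay fixed so the per-block maps can be assembled. I would therefore establish the equality by a dedicated argument, grouping the values of $A$ into the clusters cut out by the elements of $F$ (so that an ascending, resp.\ descending, adjacency is forbidden from spanning a cluster boundary) and inducting on $|A|$ by removing its largest element, or alternatively by setting up a transfer-matrix/generating-function recursion for the two block statistics in the spirit of Section~\ref{sec:umbral} and verifying that both obey the same system. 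Once the single-block counts are shown to agree, the decoupling above upgrades equality of counts into an explicit bijection respecting the positions and values of all left-to-right minima.
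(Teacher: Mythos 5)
Your first two paragraphs are correct and coincide with the first half of the paper's own proof: the left-to-right minima decomposition, the observation that both the adjacent pair and the witness of any occurrence lie among non-minima (with the ``1'' supplied by the minimum preceding the block), and the decoupling of blocks, where for block $t$ only the \emph{set} $F_t$ of later values matters (this is the paper's set $S$). The genuine gap is that the crux of the theorem --- your single-block statement --- is never proved; you write ``Granting this equality'' and offer only two unelaborated plans. The induction on $|A|$ by deleting the largest element $M$ does give a well-defined map on avoiders of either kind (one can check that the new adjacency created by deleting $M$ can never acquire a witness), but it is far from a proof: the number of legal reinsertion slots for $M$ varies from permutation to permutation, so knowing the two counts agree for $A\setminus\{M\}$ does not let you match them for $A$ without a refined statistic controlling the fiber sizes --- which is essentially the original problem over again. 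The transfer-matrix alternative is likewise only named, not executed.

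What you are missing is that your own clustering observation closes the gap in a few lines, and bijectively. Since any ascending (resp.\ descending) adjacency spanning two clusters is automatically witnessed by a separating element of $F$ (which sits in a later block, hence later in position), in an avoider the sequence of clusters visited must strictly decrease (resp.\ increase) at every cluster transition; therefore each cluster occupies one contiguous segment, and the segments appear in decreasing value order for $1\da24\da3$ and in increasing order for $1\da42\da3$. Moreover, no element of $F$ and no element of another cluster lies strictly between two values of the same cluster, so the within-segment condition is plain $13\da2$ (resp.\ $31\da2$) avoidance with witnesses inside the segment. The desired bijection is then: reverse the order of the segments and complement each segment \emph{within its own value set}. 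Clusterwise complementation permutes each cluster's values, so $A$ and every $F_t$ are preserved --- the objection you raised against global complementation disappears once the complement is taken cluster by cluster. This is exactly the paper's map $\alpha_i^{*}=T_{r+1}'T_r'\cdots T_1'$, and it also upgrades your equality of counts to an explicit bijection respecting the positions and values of all left-to-right minima.
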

\begin{proof}
We define a bijection $f=f_n$ between the sets $\mathcal{S}_n(1\mbox{-}42\mbox{-}3)$ and $\mathcal{S}_n(1\mbox{-}24\mbox{-}3)$.
To do so, let us first decompose $\pi \in \mathcal{S}_n(1\mbox{-}42\mbox{-}3)$ as
$$\pi=m_t\alpha_tm_{t-1}\alpha_{t-1}\cdots m_1\alpha_1,$$
where $t \geq 1$ and $m_t>m_{t-1}>\cdots>m_1$ are the left-to-right minima of $\pi$.  Note that $\alpha_1$ must avoid the pattern 31-2 since $m_1=1$ occurs to the left of it.  If $t \geq 2$ and $2 \leq i \leq t$, then let $S$ denote the set of letters occurring to the right of $m_{i-1}$ in $\pi$ and larger than $m_{i}$.

Suppose that the elements of $S$, assuming it is non-empty, are  $s_1<s_2<\cdots<s_r$.  Let $T$ denote the set of letters of $\pi$ belonging to the section $\alpha_i$.  Let $T_1=[s_1-1]\cap T$, $T_j=[s_{j-1}+1,s_j-1]\cap T$ if $2 \leq j \leq r$, and $T_{r+1}=[s_r+1,n]\cap T$ (put $T=T_1$ if $S$ is empty).  Then all letters of $T_{r+1}$ within $\pi$ must occur to the right of all letters of $T_j$ for $j\leq r$ in order to avoid an occurrence of 1-42-3.  To see this, note that the members of $T$ are partitioned into two parts according to whether a letter is greater or less than $s_r$.  So if some member of $T_{r+1}$ occurred to the left of some member of $T_j$ where $j \leq r$, then there would be an occurrence of 1-42-3 in which the ``1'' corresponds to $m_i$ and the ``3'' to $s_r$.  Reasoning inductively, it follows that all of the letters of $T_j$ must occur to the right of all the letters of $T_k$ if $j>k$, that is, $\alpha_i$ may be decomposed as $$\alpha_i=T_1T_2\cdots T_{r+1}.$$   Observe that each $T_j$ must avoid 31-2 since $m_i$ occurs to the left of and is smaller than all letters of $\alpha_i$.  Furthermore, it is seen that no additional restrictions concerning the $T_j$ are necessary, since $\alpha_i$ can be decomposed as described and since the elements of each $T_j$ are either all larger or all smaller than any given member of $S$.

Note that the patterns 31-2 and 13-2 are equivalent, upon replacing $k$ with $n+1-k$ for all $k$.  Let us denote this bijection by $'$.  If $2 \leq i \leq t$, then let $\alpha_i^*$ be defined by
$$\alpha_i^*=T_{r+1}'T_r'\cdots T_1',$$
where the $'$ mapping is applied to the reduced permutation after which the original letters are restored.
Let us now define $f$ by setting
$$f(\pi)=m_t\alpha_t^*m_{t-1}\alpha_{t-1}^*\cdots m_2\alpha_2^* m_1\alpha_1'.$$
It may be verified that $f(\pi) \in \mathcal{S}_n(1\mbox{-}24\mbox{-}3)$ and that $f$ is a bijection that preserves both positions and values of left-to-right minima.
\end{proof}

From \cite{Tenner2013}, we now get that $\Sav{n}{1\da42\da3} = \Sav{n}{25\bar{1}34}$ and $\Sav{n}{1\da24\da3} = \Sav{n}{23\bar{1}54}$.  Then we obtain the following corollary,  which was originally conjectured in Table 2 of \cite{Pudwell2010b}:
\begin{corollary}
$25\bar{1}34 \we 23\bar{1}54$.
\end{corollary}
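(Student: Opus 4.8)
The plan is to derive this corollary as a purely formal consequence of Theorem \ref{1-24-3}, so that no new combinatorics is needed beyond the bijection already constructed there. The strategy has three steps: first, translate each of the two barred patterns into the single vincular pattern whose avoidance class it coincides with; second, invoke Theorem \ref{1-24-3} to equate the sizes of the two resulting vincular avoidance classes; and third, chain the equalities. The essential point is that the set identities imported from \cite{Tenner2013} are equalities of avoidance classes as subsets of $\S{n}$, not merely statements about their cardinalities, so the only genuine enumerative input is Theorem \ref{1-24-3} itself.

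For the translation step I would appeal to Tenner's classification \cite{Tenner2013} of the single-barred patterns whose avoidance set equals that of a single vincular pattern. Applied here this yields the exact identities $\Sav{n}{25\bar{1}34} = \Sav{n}{1\da42\da3}$ and $\Sav{n}{23\bar{1}54} = \Sav{n}{1\da24\da3}$, valid for every $n \geq 0$. The underlying bookkeeping is that the unbarred letters of $25\bar{1}34$ form the word $2534 \oi 1423$ with underlying permutation $25134$, and that requiring every copy of $1423$ to extend to a copy of $25134$ — by inserting the smallest value in the position occupied by the barred $1$ — is precisely the condition of avoiding the vincular pattern $1\da42\da3$; symmetrically, the unbarred letters of $23\bar{1}54$ form $2354 \oi 1243$ with underlying permutation $23154$, and the analogous extension condition is exactly avoidance of $1\da24\da3$. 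Since both set identities are stated in the text immediately preceding the corollary, this step may simply be cited rather than re-derived.

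With these in hand, Theorem \ref{1-24-3} gives $1\da24\da3 \we 1\da42\da3$, i.e.\ $\bigl|\Sav{n}{1\da24\da3}\bigr| = \bigl|\Sav{n}{1\da42\da3}\bigr|$ for all $n$. Substituting the two set identities then produces the chain
\[
\bigl|\Sav{n}{25\bar{1}34}\bigr| = \bigl|\Sav{n}{1\da42\da3}\bigr| = \bigl|\Sav{n}{1\da24\da3}\bigr| = \bigl|\Sav{n}{23\bar{1}54}\bigr|
\]
for every $n \geq 0$, which is by definition the assertion $25\bar{1}34 \we 23\bar{1}54$, thereby confirming the entry conjectured in Table 2 of \cite{Pudwell2010b}.

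I do not expect a real obstacle here: the substantive difficulty was already overcome in proving Theorem \ref{1-24-3}, and what remains is formal. The one point deserving care is the matching in the translation step — one must verify that $25\bar{1}34$ corresponds to $1\da42\da3$ and $23\bar{1}54$ to $1\da24\da3$ (rather than the reverse pairing), by checking that the unbarred parts reduce to $1423$ and $1243$ respectively and that the barred $1$ sits in the position forcing the adjacency ``$\da$'' of the length-three tail of each vincular pattern. Once this correspondence is pinned down, the chain of equalities is immediate.
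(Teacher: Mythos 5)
Your proposal is correct and takes essentially the same route as the paper: the paper likewise cites Tenner's set identities $\Sav{n}{1\da42\da3} = \Sav{n}{25\bar{1}34}$ and $\Sav{n}{1\da24\da3} = \Sav{n}{23\bar{1}54}$ and combines them with Theorem \ref{1-24-3} to conclude $25\bar{1}34 \we 23\bar{1}54$. Your explicit verification of which barred pattern pairs with which vincular pattern is a sound extra check, but in the paper (as in your final step) this correspondence is simply imported from \cite{Tenner2013}.
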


\subsection{13-4-2 \bf{$\equiv$} 31-4-2}  

In this subsection, we prove the following result.

\begin{theorem}\label{13-4-2}
$13\da4\da2 \equiv 31\da4\da2$. Furthermore, this equivalence respects the last letter statistic.
\end{theorem}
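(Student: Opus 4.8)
The plan is to construct an explicit, last-letter-preserving bijection $f_n\colon \mathcal{S}_n(31\da4\da2)\to\mathcal{S}_n(13\da4\da2)$, following the template of the proof of Theorem~\ref{1-24-3}. The organizing observation is that these two patterns differ \emph{only} in the internal order of their adjacent ``$1$''--``$3$'' pair: an occurrence of either pattern is a subsequence in which an adjacent pair $\pi_i\pi_{i+1}$ carries the two middle values, followed later by a letter exceeding $\max\{\pi_i,\pi_{i+1}\}$ (the ``$4$'') and, still later, by a letter strictly between $\min\{\pi_i,\pi_{i+1}\}$ and $\max\{\pi_i,\pi_{i+1}\}$ (the ``$2$''). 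This ``$\da4\da2$'' continuation condition depends only on the \emph{set} $\{\pi_i,\pi_{i+1}\}$ and on the letters to the right of position $i+1$, and is insensitive to whether the pair is written in ascending order (yielding $13\da4\da2$) or descending order (yielding $31\da4\da2$). Hence a $31\da4\da2$-avoider is precisely a permutation in which every \emph{descending} adjacent pair fails to admit such a continuation, and a $13\da4\da2$-avoider one in which every \emph{ascending} adjacent pair fails to admit it; the bijection must toggle which parity of adjacent pair is allowed to be ``dangerous'' while keeping everything else intact.

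Since the smallest value and the terminal ``$2$'' of each pattern sit at the right end, I would decompose $\pi\in\mathcal{S}_n(31\da4\da2)$ by its \emph{right-to-left minima}, the mirror of the left-to-right minima decomposition used for Theorem~\ref{1-24-3}. Writing $\pi=\alpha_t m_t\cdots\alpha_1 m_1$, where $m_1<m_2<\cdots<m_t$ are the right-to-left minima read from the right (so $m_1=\pi_n$ is the last letter, automatically fixed by any map that preserves these minima), each block $\alpha_j$ consists of letters larger than $m_{j-1}$ and must, relative to the minima to its right, avoid a length-$3$ vincular pattern: the presence of a dangerous descending adjacent pair inside $\alpha_j$ together with a right-to-left minimum playing the ``$2$'' amounts to a copy of $31\da2=(312,\{1\})$, just as $\alpha_1$ was forced to avoid $31\da2$ in the proof of Theorem~\ref{1-24-3}. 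I would then apply within each block the same complement-reversal map $'$ that exchanges $31\da2$-avoidance for $13\da2$-avoidance, after first refining each $\alpha_j$ into monotone sub-blocks $T_1T_2\cdots T_{r+1}$ according to the set $S$ of letters to its right that exceed the relevant minimum (exactly as in the earlier proof), and reversing the order of these sub-blocks. Defining $f_n(\pi)$ to be $\pi$ with each $\alpha_j$ replaced by its transformed version $\alpha_j^{*}$ leaves all right-to-left minima in place, so in particular it respects the last letter statistic, and it should convert the ``no dangerous descent'' condition into the ``no dangerous ascent'' condition.

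The main obstacle, as in the model proof, is verifying that this local toggling is globally faithful. Concretely, I must (i) show the block-level order constraints are forced exactly by $31\da4\da2$-avoidance, so that $\alpha_j$ genuinely decomposes as $T_1\cdots T_{r+1}$ with each $T_j$ an admissible input to the map $'$ (here a slight extension of Proposition~\ref{prop:tailgaps} pins down, as in Theorem~\ref{gen2}, where the ``$4$''-role maxima must lie); and (ii) check that reversing the parity of adjacent pairs inside one block, and permuting its sub-blocks, neither destroys a forced configuration nor creates a new occurrence of $13\da4\da2$ that spans block boundaries or uses one of the minima. The cross-block bookkeeping governed by the sets $S$ and $T_j$ is the delicate part; once it is established that the transformed blocks range over exactly the admissible blocks for $\mathcal{S}_n(13\da4\da2)$ with the same right-to-left minima, bijectivity and the last-letter claim follow, and inverting the construction gives $f_n^{-1}$. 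As a consistency check, for $n=4$ the only excluded permutations are $3142$ for $31\da4\da2$ and $1342$ for $13\da4\da2$, both ending in $2$, matching the asserted last-letter invariance.
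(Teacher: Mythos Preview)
Your plan has a fatal gap: the bijection you describe would fix both the positions and the values of the right-to-left minima, and no such bijection can exist. At $n=4$ there are exactly two permutations whose right-to-left minima are $1$ at position $2$ and $2$ at position $4$, namely $3142$ and $4132$. Of these, only $4132$ avoids $31\da4\da2$ (the pair $3,1$ extends via $4$ and $2$), whereas \emph{both} avoid $13\da4\da2$ (the ascent $1,4$ in $3142$ has nothing larger than $4$ to its right). So the two avoidance classes restricted to this right-to-left-minima profile have sizes $1$ and $2$, and your map $f_4$ cannot be a bijection. (The complementary discrepancy occurs for minima $1$ at position $1$ and $2$ at position $4$: there $\Sav{4}{31\da4\da2}$ contains $\{1342,1432\}$ but $\Sav{4}{13\da4\da2}$ only $\{1432\}$.) The last-letter refinement \emph{does} hold --- both profiles contribute to last letter $2$ and the totals agree --- but the finer right-to-left-minima refinement you are relying on does not.

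There is also an early warning sign: right-to-left minima of $\pi$, read from left to right, are \emph{increasing} (the value $1$ is always the leftmost one), so in your decomposition $\pi=\alpha_t m_t\cdots\alpha_1 m_1$ one has $m_t<\cdots<m_1=\pi_n$, the reverse of what you wrote. More substantively, the analogy with Theorem~\ref{1-24-3} breaks because in $1\da42\da3$ the isolated letter on the ``minimum'' side of the adjacent pair is the ``$1$'', an extreme value of the pattern, which is exactly what left-to-right minima detect. In $31\da4\da2$ the rightmost letter is the ``$2$'', a \emph{non}-extreme value, so right-to-left minima do not exhaust or parametrize the possible ``$2$''s; the claimed reduction of the block constraint to $31\da2$-avoidance is not correct.

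The paper takes a different route: it decomposes around the position of the letter $n$, which is the natural anchor for the ``$4$''. Writing $S$ for the set of letters to the right of $n$ and partitioning the letters $T=[n-1]\setminus S$ to the left of $n$ into value-intervals $T_1,\ldots,T_{i+1}$ determined by the elements of $S$, one shows that avoidance of $13\da4\da2$ forces the left part to read $P_{i+1}P_i\cdots P_1$ (each $P_j$ a permutation of $T_j$), while avoidance of $31\da4\da2$ forces $P_1P_2\cdots P_{i+1}$; in either case each $P_j$ and the suffix $\sigma$ on $S$ need only avoid the pattern independently. Induction on $n$ then gives $|\mathcal A_{n,S}|=|\mathcal B_{n,S}|$ for every $S$, and the last-letter statement follows since $S$ determines the suffix.
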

\begin{proof}
Let $a_n$ and $b_n$ denote, respectively, the number of 13-4-2 and 31-4-2 avoiding permutations of length $n$.  We will show $a_n=b_n$ for all $n$, by induction.  Let us assume $n \geq 4$, the $n\leq3$ cases being trivial.  Given $S\subseteq [n-1]$, let $\mathcal{A}_{n,S}$ denote the subset of 13-4-2 avoiding permutations of length $n$ whose set of letters occurring to the \emph{right} of the letter $n$ is $S$.  Likewise, define $\mathcal{B}_{n,S}$ in conjunction with the pattern 31-4-2.  Let $a_{n,S}=|\mathcal{A}_{n,S}|$ and $b_{n,S}=|\mathcal{B}_{n,S}|$.  We will show that $a_{n,S}=b_{n,S}$ for all subsets $S$, which would imply $a_n=b_n$ and complete the induction.

If $S=\emptyset$, then the letter $n$ occurs as the last letter and is easily seen to be extraneous concerning the avoidance of either pattern, implying $a_{n,\emptyset}=a_{n-1}=b_{n-1}=b_{n,\emptyset}$.  So assume $S \neq \emptyset$, and let $a_1<a_2<\cdots<a_i$ denote the elements of $S$.  Let $\pi \in \mathcal{A}_{n,S}$ and $T=[n-1]-S$.  Let $T_1 \subseteq T$ denote the set of all elements of $T$ smaller than $a_1$.  Then all elements of $T_1$ occur after all elements of $T-T_1$ within $\pi$.  To see this, suppose not, and let $a \in T_1$ and $b \in T-T_1$, with $a$ occurring to the left of $b$ within $\pi$.  Then there would exist $c \in T_1$ and $d \in T-T_1$ such that $c$ directly precedes $d$ since $T$ and $T-T_1$ are complementary sets comprising all elements to the left of $n$ within $\pi$.  But then the subsequence $cdna_1$ would be an occurrence of 13-4-2 in $\pi$.

Next, let $T_2\subseteq T-T_1$ denote those elements that are smaller than the element $a_2$.  By the same reasoning, all members of $T_2$ must occur to the right of all members of $T-T_1-T_2$ in order to avoid 13-4-2.  Repeat for the subsequent letters $a_j$ of $S$, $2 \leq j \leq i$, letting $T_j\subseteq T-\cup_{r=1}^{j-1}T_r$ denote the subset whose elements are smaller than $a_j$.  Let $T_{i+1}=T-\cup_{r=1}^iT_r$.  Then $T$ is partitioned as $T=\cup_{r=1}^{i+1}T_r$, with all members of $T_r$ greater than all members of $T_s$ if $1 \leq s <r \leq i+1$.  Furthermore, within $\pi$, all members of $T_r$ occur to the left of all members of $T_s$ if $s<r$, that is, the members of $T$ within $\pi$ may be decomposed as a sequence of permutations $P_{i+1}P_i\cdots P_1$, where $P_j$ is a permutation of the elements of $T_j$.  Thus, $\pi$ may be expressed as \begin{equation}\label{e1}
\pi=P_{i+1}P_i\cdots P_1n\sigma,
\end{equation}
where $\sigma$ is a permutation of the elements of $S$.

Clearly, $\sigma$ and each $P_j$ must avoid 13-4-2 since $\pi$ does.  We now argue that $\sigma$ and each $P_j$ avoiding 13-4-2 is sufficient for $\pi$ to avoid 13-4-2, where $\pi$ is \emph{any} permutation of $[n]$ that is decomposed as in \eqref{e1} and $\sigma$ and the $P_j$ are as defined above.  By an $r$-occurrence of the pattern 13-4-2, we will mean one in which the role of the ``4'' is played by the letter $r$.  Note that $\pi$ contains no $n$-occurrences of 13-4-2, by construction.  Suppose, to the contrary, that $\pi$ does not avoid 13-4-2 and thus contains an $m$-occurrence of 13-4-2 for some $m \in [n-1]$, which we'll denote by $\tau$.  Clearly, the last letter of $\tau$ cannot occur to the left of $n$ within $\pi$ since no $P_j$ contains 13-4-2 and since $r>s$ implies all of the letters in $P_r$ are greater than all letters in $P_s$.  On the other hand, suppose that the ``2'' within $\tau$ is to the right of $n$, while the ``13'' is to the left of $n$.  Then $\tau$ is in fact an $n$-occurrence of 13-4-2 as well, since one could replace the $m$ with $n$.  But this contradicts the fact that $\pi$ contains no $n$-occurrences of 13-4-2.  Since  $n$ clearly cannot be a letter of $\tau$, it follows that $\tau$ must be contained within $\sigma$, a contradiction.  This establishes the claim.

Thus, once the letters of $T$ have been sorted according to their membership in the various $T_j$'s, each $T_j$ and the set $S$ may be ordered according to any permutation that avoids 13-4-2.  The permutations of these sets are then arranged according to \eqref{e1}.  By similar reasoning, if $\lambda \in \mathcal{B}_{n,S}$, then it may be decomposed as
\begin{equation}\label{e2}
\lambda=P_1P_2\cdots P_{i+1}n\sigma,
\end{equation}
where each $P_j$ is a permutation of the elements of $T_j$ and $\sigma$ is a permutation of $S$.  Likewise, $\sigma$ and the $P_j$ avoid 31-4-2, with no further restriction required of these sections.  By the induction hypothesis, the number of choices for $\sigma$ and each $P_j$ in the decompositions for $\pi$ and $\lambda$ in \eqref{e1} and \eqref{e2} are the same, which implies $a_{n,S}=b_{n,S}$ and establishes the equivalence.  The second statement can be obtained from the preceding proof and an induction argument.
\end{proof}

From \cite{Tenner2013}, we get that  $\Sav{n}{13\da4\da2} = \Sav{n}{1\bar{5}342}$ and $\Sav{n}{31\da4\da2}=\Sav{n}{3\bar{5}142}$.  Further, Callan shows $24\da1\da3 \we 3\bar{5}241$ in \cite{Callan2005}.   Combining the above with symmetry yields the following equivalences:
\begin{corollary}
 $1\bar{5}342 \we 3\bar{5}142 \we 3\bar{5}241$.
\end{corollary}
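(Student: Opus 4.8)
The plan is to assemble the claimed chain of barred-pattern equivalences entirely from results already in hand, treating each link separately and recording which tool supplies it. All the needed ingredients are present: the equalities of avoidance classes from \cite{Tenner2013}, Theorem \ref{13-4-2}, Callan's equivalence from \cite{Callan2005}, and the trivial complementation symmetry $\sigma \we \sigma^c$ recorded in the introduction.

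First I would establish the left link $1\bar{5}342 \we 3\bar{5}142$. Since $\Sav{n}{1\bar{5}342} = \Sav{n}{13\da4\da2}$ and $\Sav{n}{31\da4\da2} = \Sav{n}{3\bar{5}142}$ for every $n$ by \cite{Tenner2013}, each barred pattern is Wilf-equivalent to its companion vincular pattern (indeed they share the very same avoidance class). Theorem \ref{13-4-2} then supplies $13\da4\da2 \we 31\da4\da2$, and composing these three equivalences yields $1\bar{5}342 \we 3\bar{5}142$.

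For the right link $3\bar{5}142 \we 3\bar{5}241$ I would again pass to the vincular side. The same Tenner identity gives $3\bar{5}142 \we 31\da4\da2$. The key computation is that complementation carries $31\da4\da2$ to $24\da1\da3$: writing $31\da4\da2$ as $(3142,\{1\})$ and applying the rule $(\sigma,X)^c=(\sigma^c,X)$ produces $(2413,\{1\}) = 24\da1\da3$, so $31\da4\da2 \we 24\da1\da3$ by the symmetry $\sigma \we \sigma^c$. Callan's equivalence $24\da1\da3 \we 3\bar{5}241$ from \cite{Callan2005} then closes the chain, giving $3\bar{5}142 \we 3\bar{5}241$. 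Concatenating the two links proves the corollary.

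The only step demanding genuine care is the symmetry bookkeeping, namely confirming that the complement of $31\da4\da2$ is $24\da1\da3$ rather than its reverse $2\da4\da13$; this is a one-line check from the complement rule for vincular patterns, so I anticipate no real obstacle. The entire argument amounts to correctly orienting and composing equivalences that are already available, and no new combinatorial construction is required.
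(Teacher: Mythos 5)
Your proposal is correct and follows exactly the paper's route: the Tenner identifications $\Sav{n}{1\bar{5}342} = \Sav{n}{13\da4\da2}$ and $\Sav{n}{3\bar{5}142} = \Sav{n}{31\da4\da2}$, Theorem \ref{13-4-2} for the left link, and the complement symmetry $(3142,\{1\})^c = (2413,\{1\})$ together with Callan's $24\da1\da3 \we 3\bar{5}241$ for the right link. The only difference is that you make explicit the symmetry step the paper compresses into the phrase ``combining the above with symmetry,'' and your identification of the correct symmetry (complement, not reverse) is accurate.
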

These were first conjectured in \cite{Pudwell2010b}.

\subsection{31-2-4 \bf{$\equiv$} 13-2-4 and 14-2-3 \bf{$\equiv$} 41-2-3}  

A similar argument applies to the two equivalences featured here.

\begin{theorem}\label{31-2-4}
$31\da2\da4 \equiv 13\da2\da4$ and $14\da2\da3 \equiv 41\da2\da3$.
\end{theorem}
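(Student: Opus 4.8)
The plan is to mirror the decomposition argument used in the proof of Theorem \ref{13-4-2}, since the patterns here differ only by whether the final letter is the global maximum or merely a large letter. For the first equivalence, $31\da2\da4 \we 13\da2\da4$, I would induct on $n$ and classify avoiders by the position and value-set associated with the letter $n$. Given $\pi$ avoiding one of these patterns, the letter $n$ can only play the role of the ``4'' in a copy, so I would look at how $n$ forces an ordering on the letters preceding it. Concretely, writing the letters to the left of $n$ and examining which can participate with $n$ as the ``4,'' I expect to obtain a forced partition of those letters into value-intervals (determined by the letters lying to the right of $n$), exactly as the sets $T_j$ arise in the proof of Theorem \ref{13-4-2}. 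The key structural claim to establish is that avoidance of $31\da2\da4$ (respectively $13\da2\da4$) is equivalent to each block $P_j$ and the tail avoiding the same pattern, with the blocks arranged in one order for $31\da2\da4$ and the reverse order for $13\da2\da4$; the induction hypothesis then gives the count for each block independently, yielding $a_{n,S}=b_{n,S}$.

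The essential difference from Theorem \ref{13-4-2} is that here the vincular pattern has the form $\ast\da\ast\da4$ with the ``4'' at the right rather than in the interior, so the right-hand tail $\sigma$ after $n$ can itself interact with letters to the left of $n$ through the role of the ``2.'' First I would verify the analogue of the crucial sufficiency lemma: that once the left letters are grouped into the forced blocks $P_{i+1}P_i\cdots P_1$ (or $P_1\cdots P_{i+1}$) and the tail $\sigma$ (a permutation of the letters exceeding the relevant threshold) each avoid the pattern, no new copy can be created across block boundaries or straddling $n$. The argument should again proceed by the ``$m$-occurrence'' device: any putative copy using some $m<n$ as the ``4'' could be promoted to an $n$-occurrence, contradicting the construction, and any copy confined within a single block or within $\sigma$ is excluded by the block/tail avoidance.

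For the second equivalence, $14\da2\da3 \we 41\da2\da3$, I would apply a trivial symmetry rather than repeat the whole argument. Observe that $14\da2\da3$ and $41\da2\da3$ are related to $13\da2\da4$ and $31\da2\da4$ by the reverse-complement (or a suitable composition of reverse and complement) acting on the pattern. Since $\sigma \we \sigma^r \we \sigma^c \we \sigma^{rc}$ by the symmetry remarks in the introduction, and since these operations send the first pair of patterns to the second pair (the ``4'' moving to the interior while the dash positions are permuted accordingly), the equivalence $14\da2\da3 \we 41\da2\da3$ follows immediately from $31\da2\da4 \we 13\da2\da4$ once the correct symmetry is identified. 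I would state the explicit symmetry check as the one computational line needed.

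The main obstacle I anticipate is the sufficiency direction of the block decomposition: proving that the forced ordering of the blocks together with pattern-avoidance of each block and of the tail $\sigma$ is genuinely \emph{sufficient} (not merely necessary) to avoid the length-four pattern. Because the ``4'' here is the last letter of the pattern, a copy could in principle use its ``13'' (or ``31''), its ``2,'' and its ``4'' spread across three different blocks or across the $n$-barrier in a way not immediately ruled out by single-block avoidance; carefully confirming that the inter-block value constraints (all letters of $P_r$ exceed all letters of $P_s$ for $r>s$) preclude every such cross-block configuration is where the argument must be checked most carefully, just as in Theorem \ref{13-4-2}.
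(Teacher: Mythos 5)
There are two genuine gaps here, one in each half of your plan, and the second one is fatal as stated. Your reduction of $14\da2\da3 \we 41\da2\da3$ to the first equivalence via a ``trivial symmetry'' does not exist: the group generated by reverse and complement has order four, and computing the orbits directly shows they are pairwise disjoint. Writing patterns as pairs $(\sigma,X)$, complementation fixes the adjacency set while reversal moves it to the other end, so the orbit of $13\da2\da4$ is $\{13\da2\da4,\ 4\da2\da31,\ 42\da3\da1,\ 1\da3\da24\}$, the orbit of $31\da2\da4$ is $\{31\da2\da4,\ 4\da2\da13,\ 24\da3\da1,\ 1\da3\da42\}$, the orbit of $14\da2\da3$ is $\{14\da2\da3,\ 3\da2\da41,\ 41\da3\da2,\ 2\da3\da14\}$, and the orbit of $41\da2\da3$ is $\{41\da2\da3,\ 3\da2\da14,\ 14\da3\da2,\ 2\da3\da41\}$. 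In particular the complement of $14\da2\da3$ is $41\da3\da2$, \emph{not} $41\da2\da3$ --- the dash lands in the wrong place. No composition of $r$ and $c$ relates the two pairs (indeed, if one did, the second equivalence would be trivial and the paper would not state it as a result). The paper instead proves it by a second, separately constructed bijection analogous to the one for the first equivalence.

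The first half also breaks exactly at the point you flagged. Decomposing $\pi=\alpha n\beta$ at the maximum does not reproduce the $T_j$-block structure of Theorem \ref{13-4-2}, because here the ``4'' is the \emph{last} letter of the pattern: a copy of $13\da2\da4$ using $n$ as the ``4'' must have its ``1'', ``3'', and ``2'' all to the left of $n$. So what $n$ forces is that $\alpha$ avoid the length-3 vincular pattern $13\da2$ (resp.\ $31\da2$) --- not any partition of the left-hand letters into value intervals determined by the letters right of $n$. What remains are genuine cross conditions coupling $\alpha$ and $\beta$: there must be no adjacent ascent $a_ia_{i+1}$ in $\alpha$ together with letters $c$ before $d$ in $\beta$ satisfying $a_i<c<a_{i+1}<d$. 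Your promotion device fails precisely for these copies: when the ``2'' and the ``4'' both lie in $\beta$, the ``4'' cannot be promoted to $n$, since $n$ precedes the ``2'' positionally. Consequently the induction is not self-similar (the prefix must avoid a shorter, different pattern, and no block decomposition decouples $\alpha$ from $\beta$), so the claimed identity $a_{n,S}=b_{n,S}$ is unsupported. The paper's actual proof is entirely different: an inductive bijection that repeatedly transposes the letter $1$ with its successor until the permutation has no 1-occurrences of $13\da2\da4$, records the successor of $1$, deletes the $1$, recurses on the resulting permutation of $[2,n]$, and then reinserts $1$ before the recorded letter; a parallel construction handles $14\da2\da3$ versus $41\da2\da3$.
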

\begin{proof}
For the first equivalence, let $\mathcal{A}_n$ and $\mathcal{B}_n$ denote, respectively, the sets consisting of the 31-2-4 and 13-2-4 avoiding permutations of length $n$.  We will define a bijection $f_n$ between $\mathcal{A}_n$ and $\mathcal{B}_n$ in an inductive manner as follows.  For $n \leq 3$, we may clearly take $f_n$ to be the identity, so assume $n \geq 4$.  Let $\alpha=\alpha_1\alpha_2\cdots\alpha_n \in \mathcal{A}_n$.  If $\alpha_n=1$, then let $f_{n}(\alpha)=f_{n-1}(\alpha_1\alpha_2\cdots\alpha_{n-1})1$, so assume $\alpha_n\neq1$.  By a $j$-occurrence of 13-2-4 or 31-2-4, we will mean one in which the role of ``1'' is played by the letter $j$.  Consider whether or not there are any $1$-occurrences of 13-2-4 within $\alpha$.  If there aren't any, then leave the $1$ in its current position and let $\alpha'=\alpha$.  Otherwise, 1 is followed by a letter $a$ such that $1a$ are the first two letters of an occurrence of 13-2-4 in $\alpha$.  We then interchange the positions of 1 and $a$ within $\alpha$ and consider whether or not there is a $1$-occurrence of 13-2-4 in the resulting permutation.  We repeat this process of transposing the letter 1 with its successor until one first reaches the point in which there are no $1$-occurrences of 13-2-4 in the resulting permutation.  Let $\alpha'$ denote in this case the permutation that results from moving $1$ as described.

Note that if $1$ is not moved in the procedure above, then there are no 1-occurrences of 31-2-4 or 13-2-4 in $\alpha'$, while if there is at least one transposition performed, then $\alpha'$ contains no 1-occurrences of 13-2-4, but has 1-occurrences of 31-2-4.  One may verify in either case that $\alpha'$ contains no $j$-occurrences of 31-2-4 for $j>1$.  Let $b$ denote the letter directly following 1 in $\alpha'$.  We now erase the 1 from $\alpha'$ and let $\alpha^*$ denote the resulting permutation of the set $[2,n]$.  Note that $\alpha^*$ contains no occurrences of 31-2-4.  To see this, first note that any occurrence of 31-2-4 in $\alpha^*$ would have to start with the letter directly preceding 1 in $\alpha'$.  Thus, if $\alpha=\alpha'$, then $\alpha^*$ containing a 31-2-4 implies $\alpha$ contains a 1-occurrence of 31-2-4, which it doesn't.  If $\alpha \neq \alpha'$, then $\alpha^*$ containing a 31-2-4 implies $\alpha$ contains a $b$-occurrence of 31-2-4, which is again not possible.

We now apply the bijection $f_{n-1}$ to the permutation $\alpha^*$ (on the letters $\{2,3,\ldots,n\})$.  Let $\beta^*=f_{n-1}(\alpha^*)$.  Insert $1$ directly before the letter $b$ in $\beta^*$ to obtain a permutation $\beta$ of length $n$.  Set $f_n(\alpha)=\beta$.  One may verify that $f_n$ is a bijection once it is shown that inserting $1$ directly before $b$ in $\beta^*$ does not introduce a 1-occurrence of 13-2-4.

To do so, first observe that $\alpha^*$ may be decomposed as $\alpha^*=\rho_1b\rho_2\rho_3$, where the $\rho_i$ are possibly empty with $\rho_2$ containing only letters in $[b+1,n]$ and $\rho_3$ containing only letters in $[b-1]$.  Now apply the algorithm described above to the permutation $\alpha^*$, and then repeatedly to smaller and smaller permutations, until one reaches a  permutation $\lambda$ of $[b+1,n]$.  Suppose $b$ directly preceded $M$ before it was erased in the $b$-th step. Considering $M$ in place of $b$ in $\lambda$, one can apply an induction argument and conclude the following: any letter smaller than $b$ and to the right (left) of it in $\alpha^*$ remains to the right (left) of $b$ in $\beta^*$ and that no letter larger than $b$ and occurring to the left of it in $\alpha^*$ can occur to the right of $b$ in $\beta^*$.  (Note that no letter $d<b$ could be transposed with $b$ in one of the first $b-1$ steps, as there can be no 13-2-4 occurrences starting with $db$ at that point, since none of the letters of $\rho_2$ would have been moved yet and since any letters coming to the right of those in $\rho_2$ at that point would be smaller than $b$.)  Therefore, since $\alpha^*$ can be decomposed as described above, the complete set of letters which lie to the right of $b$ in $\beta^*$ would comprise all of $\rho_3$ and a subset of $\rho_2$.  Note further that all letters of $\rho_3$ will remain to the right of those of $\rho_2$ in $\beta^*$.  Thus, there can be no occurrences of 3-2-4 in $\beta^*$ in which the ``3'' corresponds to $b$.  Inserting 1 directly before $b$ in $\beta^*$ then does not introduce an occurrence of 13-2-4, which implies $f_n$ is a bijection and establishes the first equivalence.

For the second equivalence, one can define a bijection $g_n$ between the sets of 41-2-3 and 14-2-3 avoiding permutations of length $n$ which is analogous to $f_n$ above.  In the first step, one would consider whether or not a permutation $\pi$ that avoids 41-2-3 ends in 1, and if it does not, move the 1 to the right within $\pi$ by a sequence of transpositions of adjacent letters until there are no 1-occurrences of 14-2-3.  After noting the successor of 1 in the resulting permutation, one would erase the 1 and proceed inductively on the new permutation of $[2,n]$.  The remaining steps are similar to those above and the details are left to the reader.
\end{proof}

From \cite{Tenner2013}, we get $\Sav{n}{31\da2\da4} = \Sav{n}{3\bar{5}124}$ and $\Sav{n}{13\da2\da4} = \Sav{n}{1\bar{5}324}$, as well as $\Sav{n}{14\da2\da3} = \Sav{n}{1\bar{3}524}$ and $\Sav{n}{41\da2\da3} = \Sav{n}{5\bar{3}124}$.  Thus the above equivalences imply the following result:
\begin{corollary}
 ${3\bar{5}124}\we{1\bar{5}324}$ and $1\bar{3}524 \we 5\bar{3}124$.
\end{corollary}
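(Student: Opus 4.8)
The plan is to build, by induction on $n$, an explicit bijection $f_n\colon \Sav{n}{31\da2\da4}\to\Sav{n}{13\da2\da4}$, together with a completely analogous map $g_n\colon \Sav{n}{41\da2\da3}\to\Sav{n}{14\da2\da3}$. The reversibly-deletable machinery of Section~\ref{sec:prelims} does not apply directly here, since the single adjacency sits at the \emph{front} of each pattern rather than at the tail, so a direct combinatorial bijection is the natural route. The guiding observation is that the two patterns in each pair differ only in the relative order of their two adjacent letters, and that in each pattern the entry playing the role of the ``$1$'' is the global minimum of the four chosen letters. Hence the only entry of a permutation that can play the ``$1$''-role while equaling the value $1$ is the letter $1$ itself, and I would concentrate all the discrepancy between the two patterns onto the movement of this single letter. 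Following the terminology that a \emph{$j$-occurrence} is one in which the letter $j$ plays the ``$1$'' of the pattern, avoidance is equivalent to the absence of $j$-occurrences for every $j$.

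To construct $f_n$, I would argue as follows. If $\alpha\in\Sav{n}{31\da2\da4}$ ends in $1$, the trailing $1$ lies in no occurrence and I set $f_n(\alpha)=f_{n-1}(\alpha_1\cdots\alpha_{n-1})\,1$. Otherwise I slide the letter $1$ rightward, repeatedly transposing it with its immediate successor, stopping the first moment the word has no $1$-occurrence of $13\da2\da4$; call the result $\alpha'$ and let $b$ be the successor of $1$ in $\alpha'$. I then erase $1$ to obtain a permutation $\alpha^{*}$ of $\{2,\dots,n\}$, set $\beta^{*}=f_{n-1}(\alpha^{*})$, and finally reinsert $1$ immediately before $b$ to produce $\beta:=f_n(\alpha)$. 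The routine checks are: sliding $1$ past larger letters affects no occurrence avoiding the letter $1$, and every $j$-occurrence of $31\da2\da4$ with $j>1$ avoids the letter $1$; hence no such occurrence is created, and erasing $1$ removes the remaining $1$-occurrences, so $\alpha^{*}$ avoids $31\da2\da4$ and the recursion is legitimate. Invertibility is likewise routine: from $\beta$ one locates $1$, records its successor $b$, erases $1$, applies $f_{n-1}^{-1}$, reinserts $1$ before $b$, and slides $1$ leftward until no $1$-occurrence of $31\da2\da4$ remains.

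The crux, and the step I expect to be the main obstacle, is showing that reinserting $1$ immediately before $b$ in $\beta^{*}$ introduces no $1$-occurrence of $13\da2\da4$. Such an occurrence would read $1,b,w_3,w_4$ with $w_3<b<w_4$ and $w_3$ preceding $w_4$, both to the right of $b$, so I must rule out any ``smaller-than-$b$ then larger-than-$b$'' pair occurring after $b$ in $\beta^{*}$. To this end I would use the stopping condition to decompose $\alpha^{*}=\rho_1\,b\,\rho_2\,\rho_3$, where $\rho_2$ consists only of letters exceeding $b$ and $\rho_3$ only of letters below $b$, and then prove the structural invariant that $f_{n-1}$ keeps each letter smaller (respectively larger) than $b$ on the same side of $b$ as in $\alpha^{*}$; consequently, in $\beta^{*}$ every letter exceeding $b$ that remains to the right of $b$ still precedes every letter below $b$, so no forbidden straddle can follow $b$. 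This invariant I would establish by a nested induction, applying the same sliding-and-erasing algorithm repeatedly to $\alpha^{*}$ and tracking the successor of $b$ through each recursive call down to a permutation of $[b+1,n]$, arguing that during the first $b-1$ stages no letter below $b$ is ever transposed across $b$ (there is no relevant $13\da2\da4$-start then, because the letters of $\rho_2$ have not yet moved and everything arriving to the right of them is below $b$). Granting the invariant, $\beta$ avoids $13\da2\da4$, establishing $31\da2\da4 \equiv 13\da2\da4$.

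For the second equivalence I would define $g_n$ in exactly the same manner, starting from $\pi\in\Sav{n}{41\da2\da3}$: if $\pi$ does not end in $1$, slide the letter $1$ rightward by adjacent transpositions until there is no $1$-occurrence of $14\da2\da3$, record the successor $b$ of $1$, erase it, recurse via $g_{n-1}$, and reinsert $1$ before $b$. The avoidance checks and the reinsertion analysis run parallel to the first case, the only essential change being the inequalities defining a forbidden configuration after $b$: for $14\da2\da3$ the occurrence $1,b,w_3,w_4$ requires $w_3<w_4<b$, so the configuration to be excluded after $b$ is an \emph{ascent below $b$} rather than the small-then-large straddle of the first case. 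An analogous decomposition of $\alpha^{*}$ and the corresponding nested induction then show that $g_n$ is a well-defined bijection onto $\Sav{n}{14\da2\da3}$, giving $14\da2\da3 \equiv 41\da2\da3$.
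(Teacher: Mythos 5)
There is a genuine gap here: your argument proves a different statement from the one asked. The corollary concerns \emph{barred} patterns of length 5 — it asserts $3\bar{5}124 \we 1\bar{5}324$ and $1\bar{3}524 \we 5\bar{3}124$, where avoiding a barred pattern $\sigma$ means that every copy of the unbarred subpattern $\sigma'$ extends to a copy of the underlying permutation $\sigma''$. Your bijections $f_n$ and $g_n$ establish the \emph{vincular} equivalences $31\da2\da4 \we 13\da2\da4$ and $14\da2\da3 \we 41\da2\da3$; indeed, your construction (slide the letter $1$ rightward until no $1$-occurrence of the target pattern remains, record its successor $b$, erase $1$, recurse, reinsert before $b$, with the decomposition $\alpha^{*}=\rho_1 b\rho_2\rho_3$ and the nested induction) is essentially identical to the paper's proof of Theorem \ref{31-2-4}, the result immediately preceding this corollary. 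But nothing in your write-up mentions barred patterns at all, so as it stands it does not prove the corollary.

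The missing ingredient is the identification of the avoidance sets: $\Sav{n}{31\da2\da4} = \Sav{n}{3\bar{5}124}$, $\Sav{n}{13\da2\da4} = \Sav{n}{1\bar{5}324}$, $\Sav{n}{14\da2\da3} = \Sav{n}{1\bar{3}524}$, and $\Sav{n}{41\da2\da3} = \Sav{n}{5\bar{3}124}$. These coincidences are not formal trivialities: a vincular occurrence (two designated letters forced adjacent) and a barred non-extendable occurrence (a copy of $\sigma'$ with no letter available to complete $\sigma''$) are a priori different conditions, and determining when such avoidance classes coincide is precisely the subject of Tenner's work on coincidental pattern avoidance, which is what the paper cites at this point. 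With those four set equalities in hand, the corollary follows at once from the vincular equivalences you proved; without them — either cited or proven directly — your argument establishes Theorem \ref{31-2-4} but not the statement in question.
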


\section{Future Work}\label{sec:conc}

We take some time now to outline briefly the current state of the classification of vincular patterns of length 4 according to Wilf-equivalence.

For classical patterns of length 4, it has been shown that there are three Wilf-classes.  B{\'o}na provides a succinct summary of this classification in \cite[pp. 135--136]{Bona2004}.  The class representatives are $1\da2\da3\da4$, $1\da3\da4\da2$, and $1\da3\da2\da4$.

For the consecutive patterns of length 4, the Wilf-classification was resolved by Elizalde and Noy in \cite{Elizalde2003}.  In this case one finds only one equivalence not due to symmetry: $2341 \we 1342$.  The remaining equivalences are due to the reversal and complement operations.

We now move to the Wilf-classification of vincular patterns of length 4 with one or two internal dashes.  We first summarize the above equivalences as they apply to length 4 patterns.

\begin{itemize}
 \item $134\da2 \we 124\da3$
 \item $132\da4 \we 142\da3$
 \item $231\da4 \we 241\da3$
 \item $1\da24\da3 \we 1\da42\da3$
 \item $13\da4\da2 \we 31\da4\da2$
 \item $31\da2\da4 \we 13\da2\da4$
 \item $14\da2\da3 \we 41\da2\da3$
\end{itemize}

Combining these equivalences with those appearing in the literature (listed in the introduction) leaves only two conjectural equivalences, which have each been confirmed to hold for $n\leq 9$.

\begin{conjecture}\label{conj:LastWilf1}
 The following Wilf-equivalences hold:
 \begin{enumerate}
 \item  $23\da1\da4 \we 1\da23\da4$ 
 \item $14\da2\da3 \we 2\da14\da3$.  
 \end{enumerate}
\end{conjecture}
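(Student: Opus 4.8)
\emph{Plan.} The guiding observation is that, in each conjectured pair, the two patterns differ only in the \emph{side} of a distinguished adjacent ascent on which one distinguished rank sits, every other rank keeping its relative order. Reading the patterns by rank: $23\da1\da4$ places the adjacent pair on the two middle ranks $\{2,3\}$ with the minimum to its right, whereas $1\da23\da4$ places that same minimum to its left (the maximum staying to the right in both); likewise $14\da2\da3$ places the adjacent pair on the extreme ranks $\{1,4\}$ with the lower-middle (rank $2$) entry to its right, while $2\da14\da3$ places that entry to its left (rank $3$ staying to the right in both). This is exactly the configuration handled for a single neighbour in Theorem \ref{31-2-4}, where the letter $1$ is slid across its adjacent partner by a sequence of adjacent transpositions. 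The plan is therefore to build, for each conjecture, an inductive bijection of the same ``slide a distinguished letter across a fixed adjacent block'' type.

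\emph{The first equivalence.} Here the distinguished rank is the global minimum, so I would move the actual letter $1$, mimicking the proof of Theorem \ref{31-2-4}. Given $\pi \in \Sav{n}{23\da1\da4}$, call an occurrence a \emph{$1$-occurrence} if the role of the pattern's ``$1$'' is played by the letter $1$ of $\pi$. If $\pi$ ends in $1$, strip it and recurse on $[2,n]$ (the final $1$ is extraneous, by Proposition \ref{prop:tailgaps}); otherwise slide the letter $1$ rightward past the succeeding adjacent ascent, one transposition at a time, until no $1$-occurrence of $1\da23\da4$ remains, then erase the $1$, apply the bijection inductively to the resulting permutation of $[2,n]$, and reinsert $1$ before the recorded successor. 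Following the $j$-occurrence bookkeeping of Theorem \ref{31-2-4}, one must check that sliding the $1$ neither creates nor destroys any $j$-occurrence for $j>1$ and that reinsertion introduces no $1$-occurrence of $1\da23\da4$; these checks turn the output into a $1\da23\da4$-avoider and make the map reversible. As independent confirmation (and a source of the refined statistic, in the spirit of the remark after Corollary \ref{thm:equiv1}), one can instead run the functional-equation method of Section \ref{sec:umbral}, partitioning $\Sav{n}{\sigma}$ by the position of the first descent as indicated there, and verify that $23\da1\da4$ and $1\da23\da4$ yield identical operator systems; since the length-$3$ reductions $23\da1$ and $1\da23$ are already Wilf-equivalent (both enumerated by the Bell numbers, Claesson \cite{Claesson2001}), this is a natural place to expect the systems to coincide.

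\emph{The second equivalence.} Now the distinguished rank is the \emph{lower-middle} rank $2$ rather than a global extreme, and the adjacent pair straddles the global minimum and maximum of the four roles. I would run the analogous sliding algorithm, but with the element to be moved chosen as the appropriate ``rank-$2$ witness'' rather than the literal letter $1$: one processes the permutation, locates each adjacent ascent whose two entries bracket a later pair of values in the forbidden way, and transposes the offending lower-middle entry across that ascent until the avoidance condition flips from $14\da2\da3$ to $2\da14\da3$. Because this witness is not a global extreme, pinning down canonically which entry to move and when to halt is more delicate than in the first equivalence; a cleaner route may be to first apply a reverse/complement symmetry so that the moving entry becomes a global extreme, reducing the halting argument to the form already used above.

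\emph{Main obstacle.} The difficulty in both cases is that each pattern carries \emph{two} dashes, so the element being slid interacts not only with the adjacent pair it crosses but, through the remaining gap (the ``$\da4$'' or ``$\da3$''), with elements far to the right. The reversibly-deletable machinery of Propositions \ref{prop:tailrd1} and \ref{prop:tailrd2} was tailored to single-dash patterns $\sigma_1\cdots\sigma_k\da\sigma_{k+1}$ and does not directly peel off a prefix here, so the crux is an invariant---generalizing the ``no $j$-occurrence for $j>1$'' invariant of Theorem \ref{31-2-4}---that simultaneously controls both gaps and guarantees that the transposition process terminates, is reversible, and preserves avoidance across \emph{both} dashes at once. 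Establishing this invariant, now for an adjacent \emph{pair} rather than a single neighbour and with a second long-range constraint in play, is where the real work lies.
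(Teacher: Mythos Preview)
There is nothing to compare against: the statement you are addressing is Conjecture~\ref{conj:LastWilf1}, which the paper explicitly leaves open, noting only that both equivalences have been verified for $n\le 9$. The paper provides no proof, and indeed presents these as the two remaining unresolved cases in the Wilf-classification of length-$4$ vincular patterns.

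Your submission is accordingly not a proof but a plan, and you say as much: the final paragraph concedes that the key invariant---controlling $j$-occurrences for $j>1$ across \emph{two} dashes simultaneously while sliding a letter through an adjacent pair---is ``where the real work lies'' and is not established. That is the genuine gap. The analogy with Theorem~\ref{31-2-4} is suggestive, but in that proof the sliding letter is the global minimum and the single remaining dash sits to the right of everything being rearranged, which is what makes the ``no $j$-occurrence for $j>1$'' check go through; here the adjacent block $23$ (resp.\ $14$) is flanked by dashes on \emph{both} sides, so a transposition can create or destroy occurrences that use material both left and right of the moving letter. Until you produce the invariant that rules this out (or an alternative argument), the proposal remains a heuristic. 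The suggestion to try the umbral-operator route of Section~\ref{sec:umbral} is reasonable as a sanity check, but as you note, Propositions~\ref{prop:tailrd1} and~\ref{prop:tailrd2} are stated only for patterns with a single trailing dash, so that machinery would also need to be extended before it could be applied here.
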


By work in \cite{Baxter2012, Elizalde2006, Kitaev2005}, it has been shown that $1\da23\da4 \we 1\da32\da4 \we 1\da34\da2 \we 1\da43\da2$, but only one representative from this class was given in part (1).   For part (2), it should be noted that work in \cite{Kasraoui2012} proves that $2\da14\da3 \we 2\da41\da3$.  It should also be noted that part (2) is equivalent to $1\bar{3}524 \we 21\bar{3}54$ from the results in \cite{Tenner2013}.


\begin{thebibliography}{10}

\bibitem{Babson2000}
Eric Babson and Einar Steingr{\'{\i}}msson.
\newblock Generalized permutation patterns and a classification of the
  {M}ahonian statistics.
\newblock {\em S\'em. Lothar. Combin.}, 44:Art. B44b, 18 pp. (electronic),
  2000.

\bibitem{BaxterThesis}
Andrew Baxter.
\newblock {\em Algorithms for Permutation Statistics}.
\newblock PhD thesis, Rutgers, The State University of New Jersey, 2011.

\bibitem{Baxter2013}
Andrew~M. Baxter.
\newblock Shape-{W}ilf-equivalences for vincular patterns.
\newblock {\em Adv. in Appl. Math.}, 50(5):723--736, 2013.

\bibitem{Baxter2012}
Andrew~M. Baxter and Lara~K. Pudwell.
\newblock Enumeration schemes for vincular patterns.
\newblock {\em Discrete Math.}, 312(10):1699--1712, 2012.

\bibitem{Bona2004}
Mikl{\'o}s B{\'o}na.
\newblock {\em Combinatorics of Permutations}.
\newblock Discrete Mathematics and its Applications. Chapman \& Hall/CRC, Boca
  Raton, FL, 2004.
\newblock With a foreword by Richard Stanley.

\bibitem{Callan2005}
D.~{Callan}.
\newblock {A Wilf equivalence related to two stack sortable permutations}.
\newblock {\em ArXiv Mathematics e-prints}, October 2005.

\bibitem{Claesson2001}
Anders Claesson.
\newblock Generalized pattern avoidance.
\newblock {\em European J. Combin.}, 22(7):961--971, 2001.

\bibitem{Elizalde2006}
Sergi Elizalde.
\newblock Asymptotic enumeration of permutations avoiding generalized patterns.
\newblock {\em Adv. in Appl. Math.}, 36(2):138--155, 2006.

\bibitem{Elizalde2003}
Sergi Elizalde and Marc Noy.
\newblock Consecutive patterns in permutations.
\newblock {\em Adv. in Appl. Math.}, 30(1-2):110--125, 2003.
\newblock FPSAC (Scottsdale, AZ, 2001).

\bibitem{Graham1994}
Ronald~L. Graham, Donald~E. Knuth, and Oren Patashnik.
\newblock {\em Concrete Mathematics: A Foundation for Computer Science}.
\newblock Addison-Wesley Publishing Company, Reading, MA, second edition, 1994.

\bibitem{Kasraoui2012}
Anisse Kasraoui.
\newblock New {W}ilf-equivalence results for vincular patterns.
\newblock {\em European J. Combin.}, 34(2):322--337, 2013.

\bibitem{Kitaev2005}
Sergey Kitaev.
\newblock Partially ordered generalized patterns.
\newblock {\em Discrete Math.}, 298(1-3):212--229, 2005.

\bibitem{Pudwell2010b}
Lara Pudwell.
\newblock Enumeration schemes for permutations avoiding barred patterns.
\newblock {\em Electron. J. Combin.}, 17(1):Research Paper 29, 27 pp., 2010.

\bibitem{Steingrimsson2010Survey}
Einar Steingr\'{i}msson.
\newblock Generalized permutation patterns --- a short survey.
\newblock In Steve Linton, Nik Ru{\v{s}}kuc, and Vincent Vatter, editors, {\em
  Permutation Patterns, St Andrews 2007}, volume 376 of {\em London
  Mathematical Society Lecture Note Series}, pages 193--211. Cambridge
  University Press, 2010.

\bibitem{Tenner2013}
Bridget~Eileen Tenner.
\newblock Coincidental pattern avoidance.
\newblock {\em J. Comb.}, 4:311--326, 2013.

\bibitem{Zeilberger2000}
Doron Zeilberger.
\newblock The umbral transfer-matrix method: {I}. foundations.
\newblock {\em J. Combin. Theory Ser. A}, 91(1-2):451--463, 2000.
\newblock In memory of Gian-Carlo Rota.

\end{thebibliography}
\end{document}